\newcommand{\mbP}{\mathbb P}
\newcommand{\mbZ}{\mathbb Z}
\newcommand{\mbC}{\mathbb C}
\newcommand{\oM}{\overline{\mathcal M}}
\def\oM{{\overline{\mathcal{M}}}}
\def\mbQ{{\mathbb Q}}
\def\d{{\partial}}
\newcommand{\<}{\left<}
\renewcommand{\>}{\right>}
\newcommand{\eps}{\varepsilon}
\newcommand{\Coef}{\mathrm{Coef}}
\DeclareMathOperator{\Res}{Res}
\DeclareMathOperator{\res}{res}
\newcommand{\ext}{\mathrm{ext}}
\newcommand{\rspin}{\text{$r$-spin}}
\newcommand{\mcC}{\mathcal{C}}
\newcommand{\oW}{\overline{\mathcal{W}}}
\newcommand{\mcF}{\mathcal{F}}
\newcommand{\FJRW}{\mathrm{FJRW}}
\newcommand{\mcA}{\mathcal{A}}
\newcommand{\mcO}{\mathcal{O}}
\newcommand{\mcT}{\mathcal{T}}
\newcommand{\threespin}{{\text{$3$-spin}}}
\newcommand{\tcF}{\widetilde{\mathcal{F}}}
\newcommand{\hL}{\widehat{L}}
\newcommand{\ts}{\widetilde{s}}
\newcommand{\mcP}{\mathcal{P}}
\newcommand{\mcL}{\mathcal{L}}
\newcommand{\mcW}{\mathcal{W}}
\newcommand{\mbL}{\mathbb{L}}
\newcommand{\st}{\mathrm{st}}
\newcommand{\tA}{\widetilde{\mathcal{A}}}
\newtheorem{theorem}{Theorem}[section]
\newtheorem{lemma}[theorem]{Lemma}
\newtheorem{conjecture}[theorem]{Conjecture}
\newtheorem*{remark}{Remark}
\newtheorem{question}{Question}
\numberwithin{equation}{section}
\begin{document}

\title[Extended $r$-spin theory and the mirror symmetry for the $A_{r-1}$-singularity]{Extended $r$-spin theory and the mirror symmetry for the $A_{r-1}$-singularity}

\author{Alexandr Buryak}
\address{School of Mathematics, University of Leeds, Leeds, LS2 9JT, United Kingdom}
\email{a.buryak@leeds.ac.uk}
\keywords{Moduli space of curves, Frobenius manifold, singularity, mirror symmetry}
\subjclass[2010]{14H10, 53D45}

\thanks{This project has received funding from the European Union's Horizon 2020 research and innovation programme under the Marie Sk\l odowska-Curie grant agreement No. 797635 and was also supported by grants RFBR-20-01-00579 and RFBR-16-01-00409. We are grateful to V. Goryunov, S. M. Gusein-Zade, O. Karpenkov, K. Saito, S. Shadrin and R. J. Tessler for useful discussions. We also thank A. Basalaev and F. Janda, who noticed a slight inaccuracy in our consideration of the $D_4$-case in the first version of the paper.
}

\begin{abstract}
By a famous result of K.~Saito, the parameter space of the miniversal deformation of the $A_{r-1}$-singularity carries a Frobenius manifold structure. The Landau--Ginzburg mirror symmetry says that, in the flat coordinates, the potential of this Frobenius manifold is equal to the generating series of certain integrals over the moduli space of $r$-spin curves. In this paper we show that the parameters of the miniversal deformation, considered as functions of the flat coordinates, also have a simple geometric interpretation using the extended $r$-spin theory, first considered by T. J. Jarvis, T. Kimura and A. Vaintrob~\cite{JKV01b}, and studied in a recent paper of E.~Clader, R.~J.~Tessler and the author~\cite{BCT17}. We prove a similar result for the singularity~$D_4$ and present conjectures for the singularities~$E_6$ and~$E_8$.
\end{abstract}

\maketitle

\section{Introduction}

The Landau--Ginzburg mirror symmetry conjecture originates from an old physical construction of P.~Berglund and T.~H\"ubsch~\cite{BH93}. Let us very briefly recall the general statement.

\bigskip 

Let $N\ge 1$ and let us fix a matrix $A=(a_{ij})_{1\le i,j\le N}$ with non-negative integer entries~$a_{ij}$. Consider the polynomial $W(x_1,\ldots,x_N)$ and its mirror partner $W^T(x_1,\ldots,x_N)$, defined by
$$
W(x_1,\ldots,x_N):=\sum_{i=1}^N\prod_{j=1}^N x_j^{a_{ij}},\qquad W^T(x_1,\ldots,x_N):=\sum_{i=1}^N\prod_{j=1}^N x_j^{a_{ji}}.
$$ 
Suppose that the polynomial $W$ is quasihomogeneous, has an isolated critical point at the origin and $\det A\ne 0$. Quasihomogeneity means that there exist positive rational numbers $q_1,\ldots,q_N$ such that
$$
W(\lambda^{q_1}x_1,\lambda^{q_2}x_2,\ldots,\lambda^{q_N}x_N)=\lambda W(x_1,\ldots,x_N),
$$
for each $\lambda\in\mbC^*$. There are two theories, associated to the polynomial $W$. They are usually called the A-model and the B-model. 

\bigskip

The A-model is the Fan--Jarvis--Ruan--Witten (FJRW) theory (\cite{FJR13,FJR07,Wit93}) of the pair $(W,G_{W})$, where $G_{W}$ is the maximal group of diagonal symmetries of the polynomial~$W$:
$$
G_{W}:=\left\{\left.(\lambda_1,\ldots,\lambda_N)\in(\mbC^*)^N\right|W(\lambda_1x_1,\ldots,\lambda_Nx_N)=W(x_1,\ldots,x_N)\right\}.
$$
The main object in this theory is the moduli space of $W$-orbicurves. Recall that an orbifold curve $C$ with marked points $p_1,\ldots,p_n$ is a (possibly nodal) Riemann surface~$C$ with orbifold structure at each $p_i$ and each node. Moreover, we require that the local picture at each node is $\{xy=0\}/\mbZ_m$, for some $m\ge 1$, where the action of the group $\mbZ_m$ of $m$-th roots of unity is given by $\zeta_m\cdot(x,y)=(\zeta_m x,\zeta_m^{-1}y)$, $\zeta_m=e^{\frac{2\pi i}{m}}$. For an orbifold curve $C$ denote by $\rho\colon C\to|C|$ the forgetful map to the underlying (coarse, or non-orbifold) curve~$|C|$. A $W$-orbicurve is a marked orbifold curve $(C;p_1,\ldots,p_n)$ together with a collection of orbifold line bundles $L_1,\ldots,L_N$ over~$C$ and isomorphisms
$$
\phi_i\colon\bigotimes_{j=1}^N L_j^{\otimes a_{ij}}\stackrel{\sim}{\to}\rho^*\left(\omega_{|C|}\left(\sum\nolimits_{i=1}^np_i\right)\right),\quad 1\le i\le N,
$$
where $\omega_{|C|}$ is the canonical line bundle on $|C|$. Suppose that the local group at a marked point $p_i$ of $C$ is $\mbZ_{m_i}$, $m_i\ge 1$. Then the line bundles $L_1,\ldots,L_N$ induce a representation $\theta_i\colon\mbZ_{m_i}\to(\mbC^*)^N$. Our $W$-orbicurve is called stable if the underlying marked curve $(|C|;p_1,\ldots,p_n)$ is stable and if for each marked point $p_i$ the representation $\theta_i\colon\mbZ_{m_i}\to(\mbC^*)^N$ is faithful. 

\bigskip

In~\cite{FJR13} the authors proved that the moduli space of stable $W$-orbicurves of genus~$g$ with~$n$ marked points is a smooth compact orbifold. It is denoted by~$\oW_{g,n}$. The moduli space~$\oW_{g,n}$ is not connected. Numerical invariants of the representations $\theta_i$, $1\le i\le n$, give a decomposition of the moduli space $\oW_{g,n}$ into open and closed components. Consider now the case $g=0$. In~\cite{FJR07} the authors constructed a virtual fundamental class on each component of $\oW_{0,n}$ and defined the corresponding intersection number. All these intersection numbers for all components of $\oW_{0,n}$ and for all $n$ can be naturally written as the coefficients of a generating series, which is a formal power series in variables $t_0,\ldots,t_{\mu^T-1}$, $\mu^T\ge 1$, with rational coefficients. Here the number~$\mu^T$ is equal to the dimension of the local algebra
$$
\mcA_{W^T}:=\mcO_{\mbC^N,0}\left/\left(\frac{\d W^T}{\d x_1},\ldots,\frac{\d W^T}{\d x_N}\right)\right.=\mbC[x_1,\ldots,x_N]\left/\left(\frac{\d W^T}{\d x_1},\ldots,\frac{\d W^T}{\d x_N}\right)\right.
$$
of the singularity of $W^T$ at the origin, where by $\mcO_{\mbC^N,0}$ we denote the ring of germs of holomorphic functions on $\mbC^N$ at the origin. The generating series of the intersection numbers is denoted by
$$
\mcF^{\FJRW}_{0,W}(t_0,\ldots,t_{\mu^T-1})\in\mbQ[[t_0,\ldots,t_{\mu^T-1}]].
$$

\bigskip

In~\cite{FJR07} the authors proved that the function $\mcF^{\FJRW}_{0,W}$ satisfies the WDVV equations and, therefore, defines a Frobenius manifold structure in a formal neighbourhood of $0\in\mbC^{\mu^T}$. Frobenius manifolds were introduced and studied in detail by B.~Dubrovin in~\cite{Dub96}. For a more detailed introduction to the FJRW theory, we refer a reader to the original papers~\cite{FJR13,FJR07}.

\bigskip

The B-model is the Saito Frobenius manifold structure on the parameter space of a miniversal deformation of the singularity of the polynomial~$W$. A miniversal deformation (also called a universal unfolding) of the singularity of $W$ is a deformation
\begin{align}
&W_s(x_1,\ldots,x_N)=W(x_1,\ldots,x_N)+s_0+\sum_{i=1}^{\mu-1}s_i\phi_i(x_1,\ldots,x_N),\label{eq:general miniversal deformation}\\
&\phi_i(x_1,\ldots,x_N)\in\mbC[x_1,\ldots,x_N],\quad s_i\in\mbC,\notag
\end{align}    
where the polynomials $\phi_0:=1,\phi_1,\ldots,\phi_{\mu-1}$ form a basis of the local algebra $\mcA_W$ of $W$ at the origin and $\mu$ is the dimension of $\mcA_W$.

\bigskip

The Frobenius manifold structure on the parameter space $\mbC^\mu=\{(s_0,\ldots,s_{\mu-1})|s_i\in\mbC\}$ of the miniversal deformation~\eqref{eq:general miniversal deformation} is constructed in the following way. Consider the deformation $W_s(x_1,\ldots,x_N)$, as a function on~$\mbC^N\times\mbC^\mu$,
$$
W_s(x_1,\ldots,x_N)\in\mcO_{\mbC^N\times\mbC^\mu,0},
$$ 
and consider the ring 
$$
\tA_W:=\mcO_{\mbC^N\times\mbC^\mu}\left/\left(\frac{\d W_s}{\d x_1},\ldots,\frac{\d W_s}{\d x_N}\right)\right..
$$
Via the natural projection $\mbC^N\times\mbC^\mu\to\mbC^\mu$ the ring $\tA_W$ becomes an $\mcO_{\mbC^\mu,0}$-algebra. Moreover, it is a free $\mcO_{\mbC^\mu,0}$-module with the basis $\phi_0(x),\ldots,\phi_{\mu-1}(x)$. Denote by $\mcT_{\mbC^\mu,0}$ the space of germs of sections of the holomorphic tangent bundle $T\mbC^\mu$ to $\mbC^\mu$ at the origin. It is also a free $\mcO_{\mbC^\mu,0}$-module with the basis $\frac{\d}{\d s_0},\ldots,\frac{\d}{\d s_{\mu-1}}$. Let us identify the $\mcO_{\mbC^\mu,0}$-modules $\tA_W$ and $\mcT_{\mbC^\mu,0}$ by identifying the basis elements $\phi_i$ and $\frac{\d}{\d s_i}$, for each $i$. Since $\tA_W$ is an $\mcO_{\mbC^\mu,0}$-algebra, this construction endows the tangent bundle $T\mbC^\mu$ with a multiplication in a neighbourhood of the origin. 

\bigskip

A metric $\frac{1}{2}\sum_{0\le i,j\le\mu-1}g_{ij}(s)ds_ids_j$ on $\mbC^\mu$ is defined in the following way. Define a bilinear form $\<\cdot,\cdot\>$ on the local algebra $\mcA_W$ by
$$
\<p(x),q(x)\>:=\frac{1}{(2\pi i)^N}\int_{\bigcap_{i=1}^N\left\{\left|\frac{\d W}{\d x_i}\right|=\eps\right\}}\frac{p(x)q(x)dx_1\wedge\cdots\wedge dx_N}{\prod_{i=1}^N\frac{\d W}{\d x_i}},\quad p(x),q(x)\in\mcA_W,
$$  
where $\eps$ is a sufficiently small positive number. This bilinear form is symmetric and non-degenerate~\cite[Section~5.11]{AGV85}. K.~Saito~\cite{Sai83} introduced the notion of a primitive form, which is a nowhere vanishing holomorphic form of top degree on $\mbC^\mu$ in a neighbourhood of the origin with certain properties. He proved that for such a form $\zeta$ the metric $g_{ij}(s)$ on $\mbC^\mu$, defined by 
$$
g_{ij}(s):=\frac{1}{(2\pi i)^N}\int_{\bigcap_{i=1}^N\left\{\left|\frac{\d W_s}{\d x_i}\right|=\eps\right\}}\frac{\phi_i(x)\phi_j(x)\zeta}{\prod_{i=1}^N\frac{\d W_s}{\d x_i}},
$$
for sufficiently small $s_i$'s, is flat. Together with the multiplication in $T\mbC^\mu$, constructed above, this metric defines an analytical Frobenius manifold structure on $\mbC^\mu$ in a neighbourhood of the origin. The vector field $\frac{\d}{\d s_0}$ is the unit of it. Let us call this Frobenius manifold the Saito Frobenius manifold. The existence of a primitive form was proved in~\cite{Sai89}. The primitive forms for the simple singularities 
\begin{equation*}
\begin{array}{ll}
A_r\qquad       &W(x)=x^{r+1},\\
D_r             &W(x_1,x_2)=x_1^{r-1}+x_1x_2^2,\\
E_6             &W(x_1,x_2)=x_1^4+x_2^3,\\
E_7             &W(x_1,x_2)=x_1^3x_2+x_2^3,\\
E_8             &W(x_1,x_2)=x_1^5+x_2^3,
\end{array}
\end{equation*}
are given by $\lambda dx_1\wedge\cdots\wedge dx_N$, $\lambda\in\mbC^*$. For a more detailed introduction to theory of the Saito Frobenius manifolds, we refer to the paper~\cite{ST08} and to the book~\cite{Hert02}.

\bigskip

The Landau--Ginzburg mirror symmetry conjecture says that there exists a primitive form~$\zeta$ such that the Saito Frobenius manifold, corresponding to the polynomial $W$, is isomorphic to the Frobenius manifold given by the function $\mcF^\FJRW_{0,W^T}$. A precise description of the necessary primitive form together with the isomorphism is given, for example, in~\cite{HLSW15}. The conjecture is proved in certain cases~\cite{JKV01a,FJR13,KS11,MS16,LLSS17}. A step towards a proof of the conjecture in the general case was made in~\cite{HLSW15}, where the authors managed to prove the conjecture, assuming that certain small set of correlators in the A- and B-models agree (see Theorem~1.2 in~\cite{HLSW15} and the paragraph after it). 

\bigskip

The Landau--Ginzburg mirror symmetry conjecture provides a beautiful link between the singularity theory and the geometry of the moduli spaces of curves. However, one can see that the relation between the A- and B-models, which this conjecture describes, is still not complete. Consider the parameters $s_i(t_*)$ of the miniversal deformation expressed as functions of the flat coordinates. As far as we know, a description of the functions $s_i(t_*)$ and also of the primitive form $\zeta$ in terms of the A-model are not known. Therefore, it is natural to ask the following question.

\begin{question}\label{question}
How to describe the functions $s_i(t_*)$ and the primitive form $\zeta$ in terms of the A-model? 
\end{question}

\smallskip

In this note we answer this question in the case of the $A_{r-1}$-singularity, where $N=1$ and $W(x)=x^r$, $r\ge 2$, $W^T=W$. The mirror symmetry conjecture in this case was proved in~\cite{JKV01a}. The primitive form is trivial, so our question is only about the functions $s_i(t_*)$. We have $\mu=r-1$ and the function $\mcF^\FJRW_{0,W}(t_0,\ldots,t_{r-2})$ can be described as the generating series of the so-called $r$-spin intersection numbers. The $r$-spin theory possesses a certain extension, which was first considered in~\cite{JKV01b} and then studied in~\cite{BCT17} from the point of view of integrable hierarchies. The generating series $\mcF^\ext(t_0,\ldots,t_{r-1})$ of the extended $r$-spin intersection numbers depends on the old variables $t_0,\ldots,t_{r-2}$ and also on an additional variable~$t_{r-1}$. We prove that, up to certain rescaling parameters, the function $s_i(t_0,\ldots,t_{r-2})$ is equal to the coefficient of~$(t_{r-1})^i$ in the series $\frac{\d\mcF^\ext}{\d t_{r-1}}$.

\bigskip

In~\cite{BCT17} E.~Clader, R.~J.~Tessler and the author derived a topological recursion relation for the generating series of the extended $r$-spin intersection numbers with descendents. This equation immediately implies certain WDVV type equations for the function $\mcF^\ext$. We show that the mirror symmetry for the $A_{r-1}$-singularity together with the Saito formulas for the Frobenius manifold structure in the coordinates $s_i$ can be simply derived from these equations.  

\bigskip

We also answer Question~\ref{question} for the singularity $D_4$ and propose conjectural answers for the singularities~$E_6$ and~$E_8$. 

\begin{remark}
In a work in preparation \cite{GKT}, M.~Gross, T.~L.~Kelly and R.~J.~Tessler study open FJRW invariants and provide a similar interpretation of the flat coordinates for the Frobenius manifold for the Landau--Ginzburg models $(\mbC,\mbZ_r,x^r)$ and $(\mbC^2,\mbZ_r\times\mbZ_s,x^r+y^s)$ and their mirrors. In the former case, their results using open $r$-spin invariants are analogous to the results appearing here.
\end{remark}

\subsection*{Plan of the paper}

In Section~\ref{section:LG for A-singularity} we formulate precisely the statement of the Landau-Ginzburg mirror symmetry for the $A_{r-1}$-singularity. The main result of the paper, Theorem~\ref{theorem:main}, which describes the geometric interpretation of the functions $s_i(t_*)$, is contained in Section~\ref{section:main result}. In Section~\ref{section:mirror symmetry from WDVV} we show how to derive the mirror symmetry for the $A_{r-1}$-singularity from the WDVV type equations for the function $\mcF^\ext$. In Section~\ref{section:D4 singularity} we answer Question~\ref{question} for the singularity~$D_4$ and propose conjectural answers for the singularities~$E_6$ and~$E_8$.


\section{Landau-Ginzburg mirror symmetry for the $A_{r-1}$-singularity}\label{section:LG for A-singularity}

In this section we present a more detailed description of the Landau--Ginzburg mirror symmetry for the singularity $A_{r-1}$: $W(x)=x^r$, $r\ge 2$. We would also like to fix a notation for the $r$-th root of $-1$, $\theta_r:=e^{\frac{\pi i}{r}}$, which we will often use in the rest of the paper.

\subsection{A-model}

The FJRW theory of the singularity $W(x)=x^r$ can be equivalently described using the $r$-spin theory (\cite{Chi08,JKV01a}, see also \cite[Section~2]{BCT17}). An orbifold curve $(C;p_1,\ldots,p_n)$ is called $r$-stable, if the coarse underlying marked curve $|C|$ is stable and the isotropy group is $\mbZ_r$ at every marked point and node. Consider a list of integers $0\le\alpha_1,\ldots,\alpha_n\le r-1$. An $r$-spin structure with the twists $\alpha_1,\ldots,\alpha_n$ on an $r$-stable orbifold curve $(C;p_1,\ldots,p_n)$ is an orbifold line bundle $L$ over~$C$ together with an isomorphism 
$$
\phi\colon L^{\otimes r}\stackrel{\sim}{\to}\rho^*\omega_{|C|}\left(-\sum\nolimits_{i=1}^n\alpha_i p_i\right),
$$ 
and such that the isotropy groups at all markings act trivially on the fiber of $L$. Recall that by $\rho\colon C\to|C|$ we denote the forgetful map to the underlying coarse curve $|C|$. The moduli space of $r$-stable orbifold curves of genus $g$ with an $r$-spin structure with the twists $\alpha_1,\ldots,\alpha_n$ is denoted by $\oM^{1/r}_{g;\alpha_1,\ldots,\alpha_n}$. It is non-empty if and only if $2g-2-\sum\alpha_i$ is divisible by $r$, and in this case it is a smooth compact orbifold of complex dimension $3g-3+n$. 

Let us describe now the construction of the virtual fundamental class on $\oM_{g;\alpha_1,\ldots,\alpha_n}^{1/r}$ in the genus $0$ case. We assume that 
\begin{gather}\label{eq:divisibility for rspin}
r\mid\left(\sum\alpha_i+2\right).
\end{gather}
Denote by $\mcC\to\oM^{1/r}_{0;\alpha_1,\ldots,\alpha_n}$ the universal curve and by $\mcL\to\mcC$ the universal line bundle. It is straightforward to check that for any $r$-stable curve $(C;p_1,\ldots,p_n)$ and an $r$-spin structure 
$$
\left(L\to C,\phi\colon L^{\otimes r}\stackrel{\sim}{\to}\rho^*\omega_{|C|}\left(-\sum\alpha_i p_i\right)\right)
$$ 
on $C$ the cohomology group $H^0(C,L)$ vanishes and, therefore, the cohomology group $H^1(C,L)$ has dimension $\frac{\sum_{i=1}^n\alpha_i-(r-2)}{r}$. This implies that $R^1\pi_*\mcL$ is a vector bundle over $\oM_{0;\alpha_1,\ldots,\alpha_n}^{1/r}$ and we denote the dual to it by $\mcW$,
$$
\mcW:=(R^1\pi_*\mcL)^\vee.
$$
It is called the Witten bundle. The top Chern class of it,
\begin{gather}\label{eq:Witten's class}
c_W:=e(\mcW)\in H^{\deg c_W}\left(\oM_{0;\alpha_1,\ldots,\alpha_n}^{1/r},\mbQ\right),\qquad\deg c_W=2\frac{\sum\alpha_i-(r-2)}{r},
\end{gather}
is called the Witten class. It satisfies an important vanishing property which is called the Ramond vanishing: $c_W=0$, if $\alpha_i=r-1$, for some $i$.

The FJRW intersection numbers for the singularity $A_{r-1}$ are also called $r$-spin intersection numbers or $r$-spin correlators. They are obtained by integrating Witten's class against $\psi$-classes on the moduli space $\oM_{0;\alpha_1,\ldots,\alpha_n}^{1/r}$. Denote by $\mbL_i$ the line bundle over $\oM_{0;\alpha_1,\ldots,\alpha_n}^{1/r}$ whose fiber over an $r$-stable curve $C$ is the cotangent space to the coarse curve $|C|$ at the $i$-th marked point. The $r$-spin correlators in genus $0$ are defined by 
\begin{gather}\label{eq:r-spin correlators}
\<\prod_{i=1}^n\tau_{\alpha_i,d_i}\>^\rspin:=r\int_{\oM_{0;\alpha_1,\ldots,\alpha_n}^{1/r}}c_W\prod_{i=1}^n\psi_i^{d_i},\quad d_1,\ldots,d_n\ge 0.
\end{gather}
Because of the Ramond vanishing, this correlator is equal to zero, if $\alpha_i=r-1$, for some~$i$. A correlator $\<\prod\tau_{\alpha_i,d_i}\>^\rspin$ is defined to be zero, if the divisibility condition~\eqref{eq:divisibility for rspin} is not satisfied. Correlators~$\<\prod\tau_{\alpha_i,0}\>^\rspin$ are called primary correlators and also denoted by $\<\prod\tau_{\alpha_i}\>^\rspin$. To be precise, the FJRW intersection numbers for the singularity $A_{r-1}$ coincide with the primary $r$-spin correlators $\<\prod\tau_{\alpha_i}\>^\rspin$, where $0\le\alpha_i\le r-2$.

The FJRW generating series $\mcF^\FJRW_{0,W}$ in our case is also denoted by $\mcF^\rspin$ and defined by
$$
\mcF^\rspin(t_0,\ldots,t_{r-2}):=\sum_{n\ge 3}\sum_{0\le\alpha_1,\ldots,\alpha_n\le r-2}\<\prod_{i=1}^n\tau_{\alpha_i}\>^\rspin\frac{\prod_{i=1}^n t_{\alpha_i}}{n!},
$$
where $t_0,\ldots,t_{r-2}$ are formal variables. From formula~\eqref{eq:Witten's class} for the degree of Witten's class it follows that the series $\mcF^\rspin$ is a polynomial in $t_0,\ldots,t_{r-2}$ which satisfies the following homogeneity condition:
\begin{gather}\label{eq:homogeneity for r-spin}
\mcF^\rspin(\lambda^r t_0,\lambda^{r-1}t_1,\ldots,\lambda^2t_{r-2})=\lambda^{2r+2}\mcF^\rspin(t_0,t_1,\ldots,t_{r-2}),\quad\lambda\in\mbC^*.
\end{gather}
The polynomial $\mcF^\rspin$ satisfies the property $\frac{\d^3\mcF^\rspin}{\d t_0\d t_\alpha\d t_\beta}=\delta_{\alpha+\beta,r-2}$ and also the following system of equations:
\begin{gather}\label{eq:WDVV equations}
\sum_{\mu+\nu=r-2}\frac{\d^3\mcF^\rspin}{\d t_\alpha\d t_\beta\d t_\mu}\frac{\d^3\mcF^\rspin}{\d t_\nu\d t_\gamma\d t_\delta}=\sum_{\mu+\nu=r-2}\frac{\d^3\mcF^\rspin}{\d t_\alpha\d t_\gamma\d t_\mu}\frac{\d^3\mcF^\rspin}{\d t_\nu\d t_\beta\d t_\delta},\quad 0\le\alpha,\beta,\gamma,\delta\le r-2,
\end{gather}
which are called the WDVV equations. Therefore, the function $\mcF^\rspin$ defines a Frobenius manifold structure on $\mbC^{r-1}$ in the coordinates $t_0,\ldots,t_{r-2}$ with the metric $\eta=(\eta_{\alpha\beta})_{0\le\alpha,\beta\le r-2}$, given by $\eta_{\alpha\beta}=\delta_{\alpha+\beta,r-2}$, and the unit vector field $\frac{\d}{\d t_0}$.

It is worth to mention that the polynomial $\mcF^\rspin$ is uniquely determined by the homogeneity condition~\eqref{eq:homogeneity for r-spin}, the WDVV equations~\eqref{eq:WDVV equations} and the following initial conditions:
\begin{gather}\label{eq:initial for rspin}
\<\tau_\alpha\tau_\beta\tau_\gamma\>^\rspin=\delta_{\alpha+\beta+\gamma,r-2},\qquad\<\tau_{r-2}^2\tau_1^2\>^\rspin=\frac{1}{r}.
\end{gather}
This was already shown by E.~Witten in~\cite{Wit93}, but we would also like to mention the paper~\cite[Section~1.2]{PPZ16}, which contains a very short and clear proof of this fact.

\subsection{B-model}

A miniversal deformation of the singularity $W(x)=x^r$ is given by 
$$
W_s(x)=x^r+\sum_{i=0}^{r-2}s_i x^i,\quad s_i\in\mbC.
$$
Let us choose $\zeta=-\theta_r^2 r dx$ to be the primitive form in the Saito construction. So we get the following formula for the metric:
$$
g_{ij}(s)=\frac{1}{2\pi i}\int_{\left|\frac{\d W_s}{\d x}\right|=\eps}\frac{x^{i+j}}{\frac{\d W_s}{\d x}}(-\theta_r^2 r)dx=\theta_r^2 r\Res_{x=\infty}\frac{x^{i+j}}{\frac{\d W_s}{\d x}},\quad 0\le i,j\le r-2.
$$ 

Flat coordinates for the metric $g_{ij}(s)$ can be explicitly constructed in the following way (see e.g.~\cite[page 112]{Dub03}). Consider the series 
$$
k(x):=W_s(x)^{\frac{1}{r}}=x+O(x^{-1}).
$$
Introduce functions $T^\alpha(s_0,\ldots,s_{r-2})$, $1\le \alpha\le r-1$, as the first non-trivial coefficients of the expansion of $x$ in terms of $k(x)$:
$$
x=k+\frac{1}{r}\left(\frac{T^{r-1}(s)}{k}+\frac{T^{r-2}(s)}{k^2}+\ldots+\frac{T^1(s)}{k^{r-1}}\right)+O(k^{-r}).
$$
It is not hard to see that the functions $T^\alpha(s)$ are polynomials in the variables $s_0,\ldots,s_{r-2}$. They are flat coordinates for the metric $g_{ij}(s)$.

The Landau--Ginzburg mirror symmetry conjecture for the singularity $A_{r-1}$ was proved in~\cite{JKV01a}. It says that the change of variables 
\begin{gather*}
t_{\alpha}(T^*)=\theta_r^{r-\alpha}T^{\alpha+1},\quad 0\le\alpha\le r-2,
\end{gather*}
defines an isomorphism between the Saito Frobenius manifold and the Frobenius manifold, given by the potential $\mcF^\rspin$ and the unit vector field $\frac{\d}{\d t_0}$.


\section{Extended $r$-spin theory and the functions $s_i(t_*)$}\label{section:main result}

In this section we describe a certain extension of the $r$-spin theory and prove that the functions~$s_i(t_*)$ are given by the generating series of extended $r$-spin intersection numbers.

The moduli space $\oM_{g;\alpha_1,\ldots,\alpha_n}^{1/r}$ is actually well defined for all integers $\alpha_1,\ldots,\alpha_n$ and there are canonical isomorphisms $\oM_{g;\alpha_1,\ldots,\alpha_n}^{1/r}\cong\oM_{g;\beta_1,\ldots,\beta_n}^{1/r}$, if all differences $\alpha_i-\beta_i$ are divisible by $r$. In~\cite{JKV01b} the authors noticed that the construction of Witten's class on $\oM_{0;\alpha_1,\ldots,\alpha_n}^{1/r}$, described in the previous section, works in the case, when $\alpha_i=-1$ for some $i$, and $0\le\alpha_j\le r-1$ for $j\ne i$. Following~\cite{BCT17}, we refer to this theory as the extended $r$-spin theory. So for all $n\ge 2$ and integers $0\le\alpha_1,\ldots,\alpha_n\le r-1$, satisfying the divisibility condition 
\begin{gather}\label{eq:divisibility for extended rspin}
r\mid\left(\sum\alpha_i+1\right),
\end{gather}
there is well-defined Witten's class 
\begin{gather}\label{eq:degree of extended class}
c_W\in H^{\deg c_W}\left(\oM_{0;-1,\alpha_1,\ldots,\alpha_n}^{1/r},\mbQ\right),\qquad\deg c_W=2\frac{\sum\alpha_i-(r-1)}{r}.
\end{gather}
Extended $r$-spin correlators are defined by
\begin{gather}\label{eq:extended r-spin correlators}
\<\tau_{-1}\prod_{i=1}^n\tau_{\alpha_i,d_i}\>^\rspin:=r\int_{\oM_{0;-1,\alpha_1,\ldots,\alpha_n}^{1/r}}c_W\prod_{i=1}^n\psi_{i+1}^{d_i},\quad d_1,\ldots,d_n\ge 0.
\end{gather}
The correlator~\eqref{eq:extended r-spin correlators} is defined to be zero, if the divisibility condition~\eqref{eq:divisibility for extended rspin} is not satisfied. The Ramond vanishing doesn't hold in the extended theory: for example, in~\cite[Lemma~3.8]{BCT17} the authors showed that $\<\tau_{-1}\tau_1\tau_{r-1}^2\>^\rspin=-\frac{1}{r}$.

Let $t_{r-1}$ be a formal variable and introduce the generating series
$$
\mcF^\ext(t_0,\ldots,t_{r-1}):=\sum_{n\ge 2}\sum_{0\le\alpha_1,\ldots,\alpha_n\le r-1}\<\tau_{-1}\prod_{i=1}^n\tau_{\alpha_i}\>^\rspin\frac{\prod_{i=1}^n t_{\alpha_i}}{n!}.
$$
From formula~\eqref{eq:degree of extended class} it follows that the series $\mcF^\ext$ is a polynomial in $t_0,\ldots,t_{r-1}$ satisfying the following homogeneity property:
\begin{gather}\label{eq:homogeneity for extended}
\mcF^\ext(\lambda^r t_0,\lambda^{r-1}t_1,\ldots,\lambda t_{r-1})=\lambda^{r+1}\mcF^\ext(t_0,t_1,\ldots,t_{r-1}),\quad\lambda\in\mbC^*.
\end{gather}

Let us now consider the polynomial $\mcF^\ext$ as a polynomial in $t_{r-1}$ with the coefficients from~$\mbQ[t_0,\ldots,t_{r-2}]$. Consider also the parameters of the miniversal deformation $s_i(t_*)$, expressed as functions of the flat coordinates $t_0,\ldots,t_{r-2}$. The main result of the paper is the following theorem.
\begin{theorem}\label{theorem:main}
We have
$$
s_i(t_*)=(-r\theta_r)^i\Coef_{t_{r-1}^i}\frac{\d\mcF^\ext}{\d t_{r-1}},\quad 0\le i\le r-2.
$$
\end{theorem}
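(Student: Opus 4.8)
The plan is to connect the geometric side (extended $r$-spin correlators) to the singularity side (parameters $s_i$ of the miniversal deformation) by passing through the Landau--Ginzburg mirror symmetry statement recalled in Section~\ref{section:LG for A-singularity}, and to use a topological recursion / WDVV relation for $\mcF^\ext$ as the engine. First I would set up the closed formula for the parameters $s_i$ of the miniversal deformation in terms of the flat coordinates $T^\alpha$. Since $W_s(x)=x^r+\sum_{i=0}^{r-2}s_ix^i$ and $k(x)=W_s(x)^{1/r}=x+O(x^{-1})$ with the inverse expansion $x=k+\frac1r\sum_{\alpha}\frac{T^\alpha(s)}{k^{r-\alpha}}+O(k^{-r})$, one can extract the $s_i$ as polynomials in the $T^\alpha$ by computing the negative powers of $k$ in $k^r=W_s(x(k))$; this is a purely algebraic manipulation of Laurent series. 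The upshot I expect is a generating-function identity: if one forms $\widetilde k = k + \frac1r\sum_{\alpha=1}^{r-1}\frac{T^\alpha}{k^{r-\alpha}}$ and substitutes into $\widetilde k^{\,r}$, then the coefficient of each $x^i$ (equivalently $k^i$ after re-expansion) produces $x^r + \sum_i s_i x^i$, giving the $s_i$ as explicit universal polynomials in $T^1,\dots,T^{r-1}$.

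Next I would give the geometric side a matching generating-function description. The key input is the topological recursion relation for extended $r$-spin intersection numbers with descendents from \cite{BCT17}, which, after specialization to primary insertions, yields WDVV-type equations for $\mcF^\ext$ together with the ``string-like'' structure in the extended variable $t_{r-1}$. Concretely, writing $G(t_0,\dots,t_{r-2};z):=\frac{\d\mcF^\ext}{\d t_{r-1}}\big|_{t_{r-1}=z}$ as a polynomial in $z$, I expect these relations to force $G$ to satisfy a first-order ODE in $z$ (or an algebraic relation) whose coefficients are the third derivatives of $\mcF^\rspin$ — precisely the structure constants of the $r$-spin Frobenius manifold. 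The homogeneity \eqref{eq:homogeneity for extended} pins down the degrees, and the base cases, e.g.\ $\<\tau_{-1}\tau_1\tau_{r-1}^2\>^\rspin=-\frac1r$ (\cite[Lemma~3.6]{BCT17}) and $\<\tau_{-1}\tau_{r-1}\>^\rspin$-type normalizations, fix the constants of integration. This should identify $G(t_0,\dots,t_{r-2};z)$ with a shift of the superpotential: $G = $ (up to the $\theta_r$-rescaling dictated by the mirror map $t_\alpha = \theta_r^{r-\alpha}T^{\alpha+1}$) the function whose expansion in $z$ has coefficients $(-r\theta_r)^{-i}s_i$.

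Then the proof is assembled as follows. I would verify that the two generating functions — the algebraic one built from $\widetilde k^{\,r}$, and the geometric one $G$ built from $\mcF^\ext$ — satisfy the same recursion (the WDVV/TRR relations for $\mcF^\ext$ match the Laurent-series identity for $k$ after applying the mirror map), the same homogeneity weights, and the same initial conditions. By the uniqueness statement for $\mcF^\rspin$ recalled after \eqref{eq:initial for rspin}, plus the analogous uniqueness for its ``extended'' partner that the homogeneity and WDVV equations provide, the two sides coincide term by term, which is exactly $s_i(t_*)=(-r\theta_r)^i\Coef_{t_{r-1}^i}\frac{\d\mcF^\ext}{\d t_{r-1}}$.

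The main obstacle I anticipate is Step~2: extracting the right \emph{closed} form of the WDVV/TRR consequences for $\mcF^\ext$ in the extended variable and showing they are equivalent, after the mirror map, to the algebraic identity governing the Laurent inversion $x\leftrightarrow k$. The bookkeeping of the $\theta_r$-powers and the factor $-r\theta_r$ is delicate — one must track the normalization $\zeta=-\theta_r^2 r\,dx$ of the primitive form and the rescaling $t_\alpha=\theta_r^{r-\alpha}T^{\alpha+1}$ through every correlator — and the Ramond-vanishing failure in the extended theory means one cannot simply import identities from the ordinary $r$-spin theory; the boundary contributions from the $\tau_{-1}$ insertion must be handled explicitly via the recursion of \cite{BCT17}. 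Everything else (the Laurent-series algebra, the homogeneity count, matching of finitely many initial correlators) I expect to be routine.
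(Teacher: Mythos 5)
Your overall strategy --- derive WDVV-type equations for $\mcF^\ext$ from the topological recursion of \cite{BCT17}, argue that these plus homogeneity and a few initial correlators pin down $\frac{\d\mcF^\ext}{\d t_{r-1}}$, and then match against a Laurent-inversion description of the $s_i$ in terms of the flat coordinates --- is genuinely different from the paper's proof, and it has a gap at exactly the point you flag as the main obstacle. The crux of your argument is a uniqueness claim: that the WDVV-type system determines the ``extended partner'' of $\mcF^\rspin$ once homogeneity and base cases are fixed. The paper states such a uniqueness result only for $\mcF^\rspin$ itself (via \cite{PPZ16}), never for $\mcF^\ext$, and it is not automatic. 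Concretely, the relation you would extract (equation~\eqref{eq:WDVV type2} of Section~\ref{section:mirror symmetry from WDVV}) is not a closed recursion for $G=\frac{\d\mcF^\ext}{\d t_{r-1}}$ alone: it involves $\frac{\d^2\mcF^\ext}{\d t_\alpha\d t_\gamma}$ for $\alpha,\gamma\le r-2$, whose $t_{r-1}$-independent part is extra data not recoverable from $G$. So ``same recursion, same homogeneity, same initial conditions, hence equal'' does not close without a substantive new argument; and even granting uniqueness you would still have to verify that the B-side candidate $W_{s(t_*)}$ satisfies the same system, which amounts to re-deriving the Saito structure equations in the $t$-coordinates --- essentially Section~\ref{section:mirror symmetry from WDVV} run in reverse, which itself presupposes knowing how $Q$ relates to the flat coordinates.

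The paper's actual proof is short and rests on an external input your proposal does not mention: \cite[Proposition~4.10]{BCT17} already gives the closed formula $\frac{\d\tcF^\ext}{\d t^{r-1}}=\frac{1}{r(-r)^{(r-2)/(2(r+1))}}\,W_s\!\left(\frac{\lambda_r}{-r\theta_r}t^{r-1}\right)$ in the normalization of that paper, i.e.\ it identifies $\frac{\d\mcF^\ext}{\d t_{r-1}}$ with the deformed superpotential directly. What remains is bookkeeping: Lemma~\ref{lemma:another flat coordinates} matches the coordinates $v_\alpha=-\Res_{x=\infty}\left(W_s^{\alpha/r}\right)$ used in \cite{BCT17} with the flat coordinates $T^{r-\alpha}$ (your Laurent-inversion step plays the analogous role, though the closed form you propose via $\widetilde k^{\,r}$ is not quite right: since $k^r=W_s(x)$, one must invert the truncated series rather than substitute it), and the homogeneity~\eqref{eq:homogeneity for extended} then absorbs the rescaling constant $\lambda_r$. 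To complete a proof along your lines you would need either a genuine uniqueness theorem for solutions of the extended WDVV system, or --- as the paper does --- to import the closed-form evaluation of $\frac{\d\mcF^\ext}{\d t_{r-1}}$ from \cite{BCT17}.
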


Before we prove the theorem, let us present an alternative description of the flat coordinates for the Saito Frobenius manifold for the $A_{r-1}$-singularity. Define functions $v_\alpha(s_0,\ldots,s_{r-2})$, $1\le\alpha\le r-1$, by
\begin{gather*}
v_\alpha(s):=-\Res_{x=\infty}\left(W_s(x)^{\frac{\alpha}{r}}\right).
\end{gather*}
It is not hard to see that $v_\alpha(s)$ is a polynomial in $s_0,s_1,\ldots,s_{r-2}$ of the form $v_\alpha(s)=\frac{\alpha}{r}s_{r-\alpha-1}+O(s^2)$. Therefore, the functions $v_1(s),\ldots,v_{r-1}(s)$ can serve as coordinates on $\mbC^{r-1}$ in a neighbourhood of the origin.

\begin{lemma}\label{lemma:another flat coordinates}
We have 
$$
v_\alpha(s)=-\frac{\alpha}{r}T^{r-\alpha}(s),\quad 1\le\alpha\le r-1.
$$
\end{lemma}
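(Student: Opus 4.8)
The plan is to compare the two natural uniformizing parameters extracted from $W_s(x)^{1/r}$: on one hand the coefficients $T^\alpha(s)$ appearing in the inverse expansion $x = k + \frac{1}{r}\sum_{\beta} T^{r-\beta}(s) k^{-\beta} + O(k^{-r})$, where $k = W_s(x)^{1/r}$; on the other hand the residues $v_\alpha(s) = -\Res_{x=\infty} W_s(x)^{\alpha/r}$. Both are polynomials in $s_0,\dots,s_{r-2}$ with the stated leading behaviour, so it suffices to show they agree. The key identity I would use is the classical Lagrange-type residue formula: since $k(x)$ is a formal coordinate near $x = \infty$ (with $k = x + O(x^{-1})$), one has $\Res_{x=\infty} k^{\alpha} = \Res_{k=\infty} k^{\alpha}\, \frac{dx}{dk}$, and $W_s(x)^{\alpha/r} = k^{\alpha}$, so $v_\alpha(s) = -\Res_{k=\infty} k^{\alpha}\, x'(k)\, dk$ where $x = x(k)$ is the inverse series.

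First I would write $x(k) = k + \frac{1}{r}\sum_{\beta=1}^{r-1} T^{r-\beta}(s)\, k^{-\beta} + O(k^{-r})$ and differentiate: $x'(k) = 1 - \frac{1}{r}\sum_{\beta=1}^{r-1} \beta\, T^{r-\beta}(s)\, k^{-\beta-1} + O(k^{-r-1})$. Then $k^{\alpha} x'(k) = k^{\alpha} - \frac{1}{r}\sum_{\beta} \beta\, T^{r-\beta}(s)\, k^{\alpha-\beta-1} + \cdots$. The residue at infinity picks out (minus) the coefficient of $k^{-1}$, which forces $\alpha - \beta - 1 = -1$, i.e.\ $\beta = \alpha$. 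For $1 \le \alpha \le r-1$ this $\beta$ lies in the summation range, and the tail $O(k^{-r-1})$ contributes nothing to the $k^{-1}$ coefficient (nor does the $k^{\alpha}$ term, as $\alpha \ge 1$). This yields $\Res_{k=\infty}\big(k^{\alpha} x'(k)\big) = -\big(-\frac{1}{r}\alpha\, T^{r-\alpha}(s)\big) \cdot(-1)$; tracking the sign conventions for the residue at infinity (so that $\Res_{x=\infty} f\, dx = -[x^{-1}]f$), one arrives at $v_\alpha(s) = -\frac{\alpha}{r} T^{r-\alpha}(s)$, as claimed.

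I expect the main obstacle to be purely bookkeeping: getting all sign conventions consistent — the sign in $v_\alpha$, the sign in the residue at infinity, and the sign buried in the change of variables $dx = x'(k)\, dk$ — and making sure the change-of-variables step for the residue is justified at the level of formal Laurent series (which it is, since $k \mapsto x(k)$ is an invertible formal substitution fixing the point at infinity, so it induces an isomorphism on spaces of formal differentials preserving the residue at infinity). A secondary point to state cleanly is that both sides are genuinely polynomial in $s$ with the asserted leading term $\frac{\alpha}{r} s_{r-\alpha-1} + O(s^2)$ — for $v_\alpha$ this follows by expanding $W_s(x)^{\alpha/r} = x^{\alpha}(1 + x^{-r}\sum s_i x^i)^{\alpha/r}$ and extracting the $x^{-1}$ term, and for $T^{r-\alpha}$ it follows from inverting the series $k = x(1 + \cdots)^{1/r}$; these leading-term checks also serve as a consistency verification of the final formula.
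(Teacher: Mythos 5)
Your argument is correct, and it takes a genuinely different route from the paper. The paper proves the equivalent identity $x=k-\sum_{\alpha=1}^{r-1}\frac{v_\alpha(s)}{\alpha}k^{-\alpha}+O(x^{-r})$ by importing machinery from \cite{BCT17}: it identifies $W_s$ with the polynomial $\hL_0$ there, and combines eq.~(4.22) and Lemma~4.2 of that paper (statements about the dispersionless Gelfand--Dickey structure) to produce the expansion. Your proof is instead self-contained: you use that $W_s(x)^{\alpha/r}=k^\alpha$ exactly, invoke the invariance of the formal residue under the substitution $x=x(k)$ (valid since $x(k)=k+O(k^{-1})$ is an invertible formal change of coordinate at infinity), and read off $[k^{-1}]\bigl(k^\alpha x'(k)\bigr)=-\tfrac{\alpha}{r}T^{r-\alpha}(s)$ by Lagrange-type bookkeeping; your observation that the $O(k^{-r})$ tail of $x(k)$ contributes only $O(k^{\alpha-r-1})=O(k^{-2})$ and hence cannot affect the $k^{-1}$ coefficient for $\alpha\le r-1$ is exactly the point that makes the truncated expansion sufficient. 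The only blemish is the displayed intermediate value of $\Res_{k=\infty}\bigl(k^\alpha x'(k)\bigr)$, where your chain of signs as written gives $-\tfrac{\alpha}{r}T^{r-\alpha}$ for the residue itself rather than $+\tfrac{\alpha}{r}T^{r-\alpha}$ (the extra minus belongs to the definition $v_\alpha=-\Res_{x=\infty}$, not to the residue); since you flag the sign conventions explicitly and land on the correct final formula, this is presentational rather than substantive. Your approach buys independence from \cite{BCT17} and is arguably the more transparent classical argument (it is the standard residue/Lagrange-inversion derivation of the flat coordinates of the $A_{r-1}$ Frobenius manifold); the paper's approach buys brevity and sets up the notation $f_i^{[0]}=s_i$, $\hL_0=W_s$ that is reused immediately afterwards in the proof of Theorem~3.1.
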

\begin{proof}
We only have to check the following identity:
\begin{gather}\label{eq:identity for two flat}
x=k-\sum_{\alpha=1}^{r-1}\frac{v_\alpha(s)}{\alpha}\frac{1}{k^\alpha}+O(x^{-r}).
\end{gather}
For a Laurent series $A=\sum_{i=-\infty}^m a_i(s) x^i$, $a_i(s)\in\mbC[s_0,\ldots,s_{r-2}]$, denote 
$$
A_+:=\sum_{i=0}^m a_ix^i,\qquad A_-:=A-A_+,\qquad \res A:=a_{-1}.
$$
We want to use the results from~\cite[Section~4]{BCT17}. In~\cite[Lemma 4.2]{BCT17} we considered the polynomial $\hL_0=z^r+\sum_{i=0}^{r-2}f_i^{[0]}z^i$ and in~\cite[page 147]{BCT17} we introduced the variables $v_i=\res\left(\hL_0^{i/r}\right)$, $1\le i\le r-1$. We can identify $f_i^{[0]}=s_i$ and $z=x$, then we get $\hL_0=W_s$. By \cite[equation~(4.22)]{BCT17}, we have
\begin{gather}\label{eq:two flat coordinates,eq1}
\frac{r+1}{r}\left(W_s^{\frac{1}{r}}\right)_--\sum_{\alpha=1}^{r-1}\frac{\alpha}{r}\frac{\d}{\d v_\alpha}\res\left(W_s^{\frac{r+1}{r}}\right)W_s^{\frac{\alpha-r}{r}}=O(x^{-r}).
\end{gather}
From the equation before equation~(4.20) in~\cite{BCT17} and~\cite[Lemma 4.2]{BCT17} it follows that
\begin{gather}\label{eq:two flat coordinates,eq2}
v_{r-\alpha}=\frac{\alpha(r-\alpha)}{r+1}\frac{\d}{\d v_\alpha}\res\left(W_s^{\frac{r+1}{r}}\right),\quad 1\le\alpha\le r-1.
\end{gather}
Combining~\eqref{eq:two flat coordinates,eq1} and~\eqref{eq:two flat coordinates,eq2}, we obtain
$$
\left(W_s^{\frac{1}{r}}\right)_--\sum_{\alpha=1}^{r-1}\frac{v_{r-\alpha}}{r-\alpha}\frac{1}{k^{r-\alpha}}=O(x^{-r}).
$$
It remains to note that $\left(W_s^{\frac{1}{r}}\right)_-=k-x$, and identity~\eqref{eq:identity for two flat} becomes clear. The lemma is proved.
\end{proof}

\begin{proof}[Proof of Theorem~\ref{theorem:main}]
The theorem is a consequence of the following stronger statement:
\begin{gather}\label{eq:equation for the main theorem}
W_s(x)=\left.\frac{\d\mcF^\ext}{\d t_{r-1}}\right|_{t_{r-1}=-r\theta_r x}.
\end{gather}
Let us prove it.

Similarly to the proof of Lemma~\ref{lemma:another flat coordinates}, we want to use the results from~\cite[Section~4]{BCT17}. We again identify $f_i^{[0]}=s_i$, $z=x$ and $\hL_0=W_s$. In~\cite[Section 4]{BCT17} we have the variables~$T_i$, $i\ge 1$, and $t^\alpha_d$, $0\le\alpha\le r-1$, $d\ge 0$. Let us set $T_{\ge r+1}=0$, $t^\alpha_{\ge 1}=0$, and denote $t^\alpha:=t^\alpha_0$. We have the following relation \cite[equation~(4.7)]{BCT17}:
\begin{gather*}
t^\alpha=(\alpha+1)(-r)^{\frac{3(\alpha+1)}{2(r+1)}-\frac{1}{2}}T_{\alpha+1},\quad 0\le\alpha\le r-2.
\end{gather*}
Let us use the following notations:
\begin{gather*}
\tcF^\rspin:=\left.\mcF^\rspin\right|_{t_\alpha\mapsto t^\alpha},\qquad \tcF^\ext:=\left.\mcF^\ext\right|_{t_\alpha\mapsto t^\alpha}.
\end{gather*}
Since $\frac{\d^2\tcF^\rspin}{\d T_1\d T_a}=v_a$, $1\le a\le r-1$ \cite[Lemma 4.2]{BCT17}, we get
$$
\frac{\d^2\tcF^\rspin}{\d t^0\d t^{a-1}}a(-r)^{\frac{3(a+1)}{2(r+1)}-1}=v_a,\quad 1\le a\le r-1.
$$
Together with the formulas $\frac{\d^2\tcF^\rspin}{\d t^0\d t^{a-1}}=t^{r-1-a}$ and $v_a=-\frac{a}{r}T^{r-a}$ this implies that 
$$
t^a=\frac{1}{\lambda_r^{r-a}}t_a,\quad\text{where}\quad 0\le a\le r-2\quad\text{and}\quad\lambda_r=\theta_r(-r)^{\frac{3}{2(r+1)}}.
$$

Proposition~4.10 in~\cite{BCT17} says that
$$
\frac{\d\tcF^\ext}{\d t^{r-1}}=\frac{1}{r(-r)^{\frac{r-2}{2(r+1)}}}W_s\left(\frac{\lambda_r}{-r\theta_r}t^{r-1}\right).
$$
We compute
$$
\left.\frac{\d\tcF^\ext}{\d t^{r-1}}\right|_{t^{r-1}=\frac{x}{\lambda_r}}=\frac{\d\tcF^\ext}{\d t^{r-1}}\left(\frac{t_0}{\lambda_r^r},\frac{t_1}{\lambda_r^{r-1}},\ldots,\frac{t_{r-2}}{\lambda_r^2},\frac{x}{\lambda_r}\right)\stackrel{\text{by~\eqref{eq:homogeneity for extended}}}{=}\frac{1}{\lambda_r^r}\left.\frac{\d\mcF^\ext}{\d t_{r-1}}\right|_{t_{r-1}=x}.
$$
Therefore,
$$
\frac{1}{\lambda_r^r}\left.\frac{\d\mcF^\ext}{\d t_{r-1}}\right|_{t_{r-1}=x}=\frac{1}{r(-r)^{\frac{r-2}{2(r+1)}}}W_s\left(\frac{x}{-r\theta_r}\right)\Rightarrow\left.\frac{\d\mcF^\ext}{\d t_{r-1}}\right|_{t_{r-1}=x}=W_s\left(\frac{x}{-r\theta_r}\right),
$$
which proves equation~\eqref{eq:equation for the main theorem}.
\end{proof}


\section{Mirror symmetry as a consequence of the WDVV type equations}\label{section:mirror symmetry from WDVV}

In the previous section we saw that the functions $s_i(t_*)$ appear as the coefficients of the polynomial $\frac{\d\mcF^\ext}{\d t_{r-1}}$. Now we want to show that the Saito formulas for the multiplication and the metric in the coordinates $s_i$ can be naturally deduced from the WDVV type equations for the polynomial~$\mcF^\ext$.

Introduce formal variables $t_{\alpha,d}$, $0\le\alpha\le r-1$, $d\ge 0$, and let $t_{\alpha,0}:=t_\alpha$. Consider the generating series
\begin{align*}
F^\rspin(t_{*,*}):=&\sum_{n\ge 3}\sum_{\substack{0\le\alpha_1,\ldots,\alpha_n\le r-2\\d_1,\ldots,d_n\ge 0}}\<\prod_{i=1}^n\tau_{\alpha_i,d_i}\>^\rspin\frac{\prod_{i=1}^n t_{\alpha_i,d_i}}{n!},\\
F^\ext(t_{*,*}):=&\sum_{n\ge 2}\sum_{\substack{0\le\alpha_1,\ldots,\alpha_n\le r-1\\d_1,\ldots,d_n\ge 0}}\<\tau_{-1}\prod_{i=1}^n\tau_{\alpha_i,d_i}\>^\rspin\frac{\prod_{i=1}^n t_{\alpha_i,d_i}}{n!}.
\end{align*}
In~\cite[equation~(4.26)]{BCT17} the authors proved the following topological recursion relation:
$$
\frac{\d^2F^\ext}{\d t_{\alpha,p+1}\d t_{\beta,q}}=\sum_{\mu+\nu=r-2}\frac{\d^2 F^\rspin}{\d t_{\alpha,p}\d t_{\mu,0}}\frac{\d^2F^\ext}{\d t_{\nu,0}\d t_{\beta,q}}+\frac{\d F^\ext}{\d t_{\alpha,p}}\frac{\d^2F^\ext}{\d t_{r-1,0}\d t_{\beta,q}},\quad0\le\alpha,\beta\le r-1,\quad p,q\ge 0.
$$
These relations can be equivalently written as the following equations between differential $1$-forms:
$$
d\left(\frac{\d F^\ext}{\d t_{\alpha,p+1}}\right)=\sum_{\mu+\nu=r-2}\frac{\d^2 F^\rspin}{\d t_{\alpha,p}\d t_{\mu,0}}d\left(\frac{\d F^\ext}{\d t_{\nu,0}}\right)+\frac{\d F^\ext}{\d t_{\alpha,p}}d\left(\frac{\d F^\ext}{\d t_{r-1,0}}\right),\quad 0\le\alpha\le r-1,\quad p\ge 0.
$$
Taking the exterior derivative of the both sides and setting $p=0$ and $t_{*,\ge 1}=0$, we obtain the following relation:
$$
\sum_{\mu+\nu=r-2}d\left(\frac{\d^2 \mcF^\rspin}{\d t_\alpha\d t_\mu}\right)\wedge d\left(\frac{\d\mcF^\ext}{\d t_\nu}\right)+d\left(\frac{\d\mcF^\ext}{\d t_\alpha}\right)\wedge d\left(\frac{\d\mcF^\ext}{\d t_{r-1}}\right)=0,\quad 0\le\alpha\le r-1,
$$
or, equivalently,
\begin{gather}\label{eq:WDVV type}
\sum_{\mu+\nu=r-2}\frac{\d^3 \mcF^\rspin}{\d t_\alpha\d t_\beta\d t_\mu}\frac{\d^2\mcF^\ext}{\d t_\nu\d t_\gamma}+\frac{\d^2\mcF^\ext}{\d t_\alpha\d t_\beta}\frac{\d^2\mcF^\ext}{\d t_{r-1}\d t_\gamma}=\sum_{\mu+\nu=r-2}\frac{\d^3 \mcF^\rspin}{\d t_\alpha\d t_\gamma\d t_\mu}\frac{\d^2\mcF^\ext}{\d t_\nu\d t_\beta}+\frac{\d^2\mcF^\ext}{\d t_\alpha\d t_\gamma}\frac{\d^2\mcF^\ext}{\d t_{r-1}\d t_\beta},
\end{gather}
where $0\le\alpha,\beta,\gamma\le r-1$. We call these equations the WDVV type equations for the function~$\mcF^\ext$. They already appeared in literature before in the context of open Gromov--Witten theory~\cite{HS12}. For $\beta=r-1$, equation~\eqref{eq:WDVV type} looks as follows:
\begin{gather}\label{eq:WDVV type2}
\frac{\d^2\mcF^\ext}{\d t_\alpha\d t_{r-1}}\frac{\d^2\mcF^\ext}{\d t_{r-1}\d t_\gamma}=\sum_{\mu+\nu=r-2}\frac{\d^3 \mcF^\rspin}{\d t_\alpha\d t_\gamma\d t_\mu}\frac{\d^2\mcF^\ext}{\d t_\nu\d t_{r-1}}+\frac{\d^2\mcF^\ext}{\d t_\alpha\d t_\gamma}\frac{\d^2\mcF^\ext}{\d t^2_{r-1}},
\end{gather}
and is non-trivial only if $0\le\alpha,\gamma\le r-2$. 

Define functions $\ts_i(t_0,\ldots,t_{r-2})$, $0\le i\le r-2$, by 
$$
\ts_i(t_0,\ldots,t_{r-2}):=(-r\theta_r)^i\Coef_{t_{r-1}^i}\left(\frac{\d\mcF^\ext}{\d t_{r-1}}\right).
$$
Of course, we already know that $\ts_i(t_*)=s_i(t_*)$, but we don't want to use it in this section and want to show that the Saito formulas for the Frobenius manifold structure in the coordinates~$\ts_i$ can be derived directly from the WDVV type equations~\eqref{eq:WDVV type2}. 

Let us begin with the metric. We have to prove that
\begin{gather}\label{eq:metric from WDVV}
-\Res_{t_{r-1}=\infty}\left(\frac{\frac{\d^2\mcF^\ext}{\d t_\alpha\d t_{r-1}}\frac{\d^2\mcF^\ext}{\d t_\beta\d t_{r-1}}}{\frac{\d^2\mcF^\ext}{\d t^2_{r-1}}}\right)=-r\delta_{\alpha+\beta,r-2},\quad 0\le\alpha,\beta\le r-2.
\end{gather}
For this we compute
\begin{align}
-\Res_{t_{r-1}=\infty}\left(\frac{\frac{\d^2\mcF^\ext}{\d t_\alpha\d t_{r-1}}\frac{\d^2\mcF^\ext}{\d t_\beta\d t_{r-1}}}{\frac{\d^2\mcF^\ext}{\d t^2_{r-1}}}\right)\stackrel{\text{by~\eqref{eq:WDVV type2}}}{=}&-\Res_{t_{r-1}=\infty}\left(\frac{\d^2\mcF^\ext}{\d t_\alpha\d t_\beta}\right)-\label{eq:mirror from WDVV,1}\\
&-\sum_{\mu+\nu=r-2}\frac{\d^3\mcF^\rspin}{\d t_\alpha\d t_\beta\d t_\mu}\Res_{t_{r-1}=\infty}\left(\frac{\frac{\d^2 \mcF^\ext}{\d t_\nu\d t_{r-1}}}{\frac{\d^2\mcF^\ext}{\d t_{r-1}^2}}\right).\notag
\end{align}
The first term on the right-hand side of this equation is equal to zero, because $\frac{\d^2\mcF^\ext}{\d t_\alpha\d t_\beta}$ is a polynomial in $t_{r-1}$. In order to compute the second term, note that the homogeneity condition~\eqref{eq:homogeneity for extended} implies that the polynomial $\mcF^\ext$ has the form
$$
\mcF^\ext=\<\tau_{-1}\tau_{r-1}^{r+1}\>^\rspin\frac{t_{r-1}^{r+1}}{(r+1)!}+\<\tau_{-1}\tau_{r-2}\tau_{r-1}^{r-1}\>^\rspin t_{r-2}\frac{t_{r-1}^{r-1}}{(r-1)!}+O(t_{r-1}^{r-2}).
$$
Since \cite[proof of Theorem~4.6]{BCT17}
\begin{gather}\label{eq:one-point extended}
\<\tau_{-1}\tau_\gamma\tau_{r-1}^{\gamma+1}\>^\rspin=\frac{\gamma!}{(-r)^\gamma},\quad 0\le \gamma\le r-1,
\end{gather}
we obtain
$$
\frac{\d\mcF^\ext}{\d t_{r-1}}=-\frac{1}{(-r)^r}t_{r-1}^r+\frac{t^{r-2}}{(-r)^{r-2}}t_{r-1}^{r-2}+O(t_{r-1}^{r-3}).
$$
So the second term on the right-hand side of~\eqref{eq:mirror from WDVV,1} is equal to
\begin{align*}
-\frac{\d^3\mcF^\rspin}{\d t_\alpha\d t_\beta\d t_0}\Res_{t_{r-1}=\infty}\left(\frac{\frac{\d^2 \mcF^\ext}{\d t_{r-2}\d t_{r-1}}}{\frac{\d^2\mcF^\ext}{\d t_{r-1}^2}}\right)=&-\delta_{\alpha+\beta,r-2}\Res_{t_{r-1}=\infty}\left(\frac{\frac{t_{r-1}^{r-2}}{(-r)^{r-2}}+O(t_{r-1}^{r-3})}{\frac{t_{r-1}^{r-1}}{(-r)^{r-1}}+O(t_{r-1}^{r-3})}\right)=\\
=&-r\delta_{\alpha+\beta,r-2}.
\end{align*}
Thus, equation~\eqref{eq:metric from WDVV} is proved.

Let us now show that the multiplication in the variables $\ts_i$ is given by the Saito construction. For this we have to check that
\begin{gather}\label{eq:multiplication formula for A}
\sum_{k=0}^{r-2}x^k c^k_{ij}=x^{i+j}\text{ mod }\frac{\d Q}{\d x},
\end{gather}
where by $c^k_{ij}$, $0\le i,j,k\le r-2$, we denote the structure constants of the multiplication in the variables $\ts_*$, and $Q(t_*,x):=\left.\frac{\d\mcF^\ext}{\d t_{r-1}}\right|_{t_{r-1}=-r\theta_rx}$. Denote by $c_{\alpha\beta}^\gamma$, $0\le\alpha,\beta,\gamma\le r-2$, the structure constants of the multiplication in the variables $t_*$. Clearly, $c_{\alpha\beta}^\gamma=\frac{\d^3\mcF^\rspin}{\d t_\alpha\d t_\beta\d t_{r-2-\gamma}}$. We compute (we follow the convention of sum over repeated Greek indices)
\begin{align*}
\sum_{k=0}^{r-2}x^k c^k_{ij}=&\frac{\d t_\alpha}{\d\ts_i}\frac{\d t_\beta}{\d\ts_j}c^\gamma_{\alpha\beta}\sum_{k=0}^{r-2}\frac{\d\ts_k}{\d t_\gamma}x^k=\frac{\d t_\alpha}{\d\ts_i}\frac{\d t_\beta}{\d\ts_j}c^\gamma_{\alpha\beta}\frac{\d Q}{\d t_\gamma}=\\
=&\frac{\d t_\alpha}{\d\ts_i}\frac{\d t_\beta}{\d\ts_j}\sum_{\mu+\nu=r-2}\frac{\d^3\mcF^\rspin}{\d t_\alpha\d t_\beta\d t_\mu}\left.\frac{\d^2\mcF^\ext}{\d t_\nu\d t_{r-1}}\right|_{t_{r-1}=-r\theta_rx}\stackrel{\text{by~\eqref{eq:WDVV type2}}}{=}\\
=&\frac{\d t_\alpha}{\d\ts_i}\frac{\d t_\beta}{\d\ts_j}\left(\frac{\d Q}{\d t_\alpha}\frac{\d Q}{\d t_\beta}+\frac{1}{r\theta_r}\frac{\d Q}{\d x}\left.\frac{\d^2\mcF^\ext}{\d t_\alpha\d t_\beta}\right|_{t_{r-1}=-r\theta_r x}\right)=\\
=&\frac{\d Q}{\d\ts_i}\frac{\d Q}{\d\ts_j}+\frac{\d Q}{\d x}\left(\frac{1}{r\theta_r}\frac{\d t_\alpha}{\d\ts_i}\frac{\d t_\beta}{\d\ts_j}\left.\frac{\d^2\mcF^\ext}{\d t_\alpha\d t_\beta}\right|_{t_{r-1}=-r\theta_rx}\right)=\\
=&x^{i+j}\text{ mod }\frac{\d Q}{\d x},
\end{align*}
which proves formula~\eqref{eq:multiplication formula for A}.


\section{Singularity $D_4$}\label{section:D4 singularity}

In this section we give an answer to Question~\ref{question} for the singularity $D_4=x_1^3+x_1x_2^2$. We also propose conjectural answers for the singularities $E_6$ and $E_8$.

The Landau--Ginzburg mirror symmetry for the polynomial $x_1^3+x_1x_2^2$ is equivalent to the mirror symmetry for the polynomial $x_1^3+x_2^3$. In order to see it, note, first of all, that the singularity $\{x_1^3+x_1x_2^2=0\}\subset\mbC^2$ is just three lines intersecting at the origin. Therefore, a linear change of variables transforms this singularity to the singularity $\{x_1^3+x_2^3=0\}\subset\mbC^2$. Thus, the Saito Frobenius manifolds, corresponding to the polynomials~$x_1^3+x_1x_2^2$ and~$x_1^3+x_2^3$, are isomorphic. The mirror partner to the polynomial $D_4=x_1^3+x_1x_2^2$ is the polynomial $D_4^T=x_1^3x_2+x_2^2$. The potential $\mcF^\FJRW_{0,D_4^T}$ was computed in~\cite{FFJMR16} and in Section~\ref{subsection:A-model for D4} we show that it is equivalent to the FJRW potential for the polynomial $x_1^3+x_2^3$. 

In Section~\ref{subsection:B-model for D4} we describe explictly the B-model for the polynomial $x_1^3+x_2^3$ together with the Landau--Ginzburg mirror symmetry in this case. Then in Section~\ref{subsection:answer for D4} we answer Question~\ref{question} for the polynomial $x_1^3+x_2^3$ and in Section~\ref{subsection:conjecture for E6 and E8} propose conjectures for the singularities $E_6$ and~$E_8$.

\subsection{A-model}\label{subsection:A-model for D4}

Let us consider the more general case of the polynomial
$$
W_{r_1,r_2}(x_1,x_2):=x_1^{r_1}+x_2^{r_2},\quad r_1,r_2\ge 3.
$$
The dimension of the corresponding local algebra $\mcA_{W_{r_1,r_2}}$ is equal to $(r_1-1)(r_2-1)$. The FJRW theory for the polynomial~$W_{r_1,r_2}$ can be described using the $r$-spin theory in the following way. Consider the moduli space $\oM_{0,n}$ of stable curves of genus $0$ with $n$ marked points and denote by $\st\colon\oM_{0;\alpha_1,\ldots,\alpha_n}^{1/r}\to\oM_{0,n}$ the forgetful map, which forgets an $r$-spin structure together with an orbifold structure on an orbifold curve. For $0\le\alpha_1,\ldots,\alpha_n\le r-1$, satisfying the divisibility condition~\ref{eq:divisibility for rspin}, define the cohomology class
\begin{gather}\label{eq:rspin class}
c^\rspin(\alpha_1,\ldots,\alpha_n):=r\cdot\st_*(c_W)\in H^*(\oM_{0,n},\mbQ)
\end{gather}
of degree
\begin{gather}\label{eq:degree of crspin}
\deg c^\rspin(\alpha_1,\ldots,\alpha_n)=2\frac{\sum\alpha_i-(r-2)}{r}.
\end{gather}
The class $c^\rspin(\alpha_1,\ldots,\alpha_n)$ is defined to be zero, if the divisibility condition~\eqref{eq:divisibility for rspin} is not satisfied.

The FJRW intersection numbers for the polynomial $W_{r_1,r_2}$ are given by~\cite[Theorem 4.2.2]{FJR13})
\begin{gather}\label{eq:correlator for W}
\<\prod_{i=1}^n\tau_{\alpha_i,\beta_i}\>^{W_{r_1,r_2}}\hspace{-0.4cm}=\int_{\oM_{0,n}}\hspace{-0.3cm}c^\text{$r_1$-spin}(\alpha_1,\ldots,\alpha_n)c^\text{$r_2$-spin}(\beta_1,\ldots,\beta_n),\hspace{0.3cm}0\le\alpha_i\le r_1-2,\hspace{0.3cm}0\le\beta_i\le r_2-2.
\end{gather}
By~\eqref{eq:degree of crspin} and the divisibility condition~\eqref{eq:divisibility for rspin}, this intersection number is equal to zero unless the following constraints are satisfied:
\begin{gather}\label{eq:selection rules}
\sum_{i=1}^n\left(1-\frac{\alpha_i}{r_1}-\frac{\beta_i}{r_2}\right)=1+\frac{2}{r_1}+\frac{2}{r_2},\qquad \sum_{i=1}^n\alpha_i=r_1-2\text{ mod $r_1$},\qquad \sum_{i=1}^n \beta_i=r_2-2\text{ mod $r_2$}. 
\end{gather}
The FJRW generating series is equal to
$$
\mcF^\FJRW_{0,W_{r_1,r_2}}=\sum_{n\ge 3}\sum_{\substack{0\le\alpha_1,\ldots,\alpha_n\le r_1-2\\0\le\beta_1,\ldots,\beta_n\le r_2-2}}\<\prod_{i=1}^n\tau_{\alpha_i,\beta_i}\>^{W_{r_1,r_2}}\frac{\prod_{i=1}^n t_{\alpha_i,\beta_i}}{n!},
$$
where $t_{\alpha,\beta}$, $0\le\alpha\le r_1-2$, $0\le\beta\le r_2-2$, are formal variables. The unit vector of the corresponding Frobenius manifold is $\frac{\d}{\d t_{0,0}}$ and the metric is
$$
\eta_{\alpha_1,\beta_1;\alpha_2,\beta_2}=\frac{\d^3\mcF^\FJRW_{0,W_{r_1,r_2}}}{\d t_{0,0}\d t_{\alpha_1,\beta_1}\d t_{\alpha_2,\beta_2}}=\delta_{\alpha_1+\alpha_2,r_1-2}\delta_{\beta_1+\beta_2,r_2-2}.
$$

Consider now the case $r_1=r_2=3$. The function $\mcF^\FJRW_{0,W_{3,3}}$ depends on four variables $t_{0,0},t_{1,0},t_{0,1},t_{1,1}$. The metric is antidiagonal in these variables. From constraints~\eqref{eq:selection rules} it follows that there are only two non-trivial $3$-point correlators and only two non-trivial $4$-point correlators:
\begin{gather}\label{eq:initial for D4}
\<\tau_{1,1}\tau_{0,0}\tau_{0,0}\>^{W_{3,3}}=\<\tau_{0,1}\tau_{1,0}\tau_{0,0}\>^{W_{3,3}}=1,\qquad\<\tau_{1,1}\tau^3_{1,0}\>^{W_{3,3}}=\<\tau_{1,1}\tau_{0,1}^3\>^{W_{3,3}}=\frac{1}{3},
\end{gather}
which are computed using equations~\eqref{eq:correlator for W} and~\eqref{eq:initial for rspin}. Using the argument, similar to the one, presented in~\cite[Section~1.2]{PPZ16}, one can show that the function $\mcF^\FJRW_{0,W_{3,3}}$ is uniquely determined by the WDVV equations, the first constraint in~\eqref{eq:selection rules} and the initial conditions~\eqref{eq:initial for D4}. As a result, we obtain
\begin{gather}\label{eq:potential for W33}
\mcF^\FJRW_{0,W_{3,3}}=\frac{1}{2} t_{0,0}^2 t_{1,1}+t_{0,0} t_{1,0}t_{0,1}+\frac{1}{18} t_{1,0}^3 t_{1,1}+\frac{1}{18} t_{0,1}^3 t_{1,1}+\frac{1}{54} t_{1,0} t_{0,1} t_{1,1}^3+\frac{t_{1,1}^7}{68040}.
\end{gather}

Let us compare the potential~\eqref{eq:potential for W33} with the FJRW potential for the singularity $D_4^T$. According to \cite[page 183]{FFJMR16}, we have
$$
\mcF^\FJRW_{0,D_4^T}=\frac{1}{12} t_X^2 t_1 - \frac{1}{4} t_Y^2 t_1 + \frac{1}{12} t_{X^2} t_1^2 + \frac{a}{6} t_X^3 t_{X^2} + \frac{3 a}{2} t_X t_Y^2 t_{X^2} + a^2 t_X^2 t_{X^2}^3 - 3 a^2 t_Y^2 t_{X^2}^3 + \frac{36a^4}{35} t_{X^2}^7,
$$
where $t_1,t_X,t_Y,t_{X^2}$ are formal variables and $a$ is an explicitly computed non-zero constant.  The unit vector field is $\frac{\d}{\d t_1}$. One can directly compute that after the change of variables
\begin{gather*}
t_{0,0}=t_1,\qquad t_{1,0}=\beta t_X+\gamma t_Y,\qquad t_{0,1}=\beta t_X-\gamma t_Y,\qquad t_{1,1}=\delta t_{X^2},
\end{gather*}
with 
$$
\beta=3^{2/3}a^{1/3},\qquad \gamma=3\cdot 3^{1/6}a^{1/3},\qquad\delta=6\cdot 3^{1/3}a^{2/3},
$$
we get 
$$
\mcF^\FJRW_{0,D_4^T}=\frac{1}{36\cdot 3^{1/3}a^{2/3}}\mcF^\FJRW_{0,W_{3,3}}.
$$
Therefore, after a rescaling of the metric the FJRW Frobenius manifolds for the polynomials~$D_4^T$ and~$W_{3,3}$ become isomorphic.

\subsection{B-model}\label{subsection:B-model for D4}

Consider the following miniversal deformation of the singularity $W_{3,3}$:
$$
W_{3,3;s}(x_1,x_2)=x_1^3+x_2^3+s_{1,1}x_1x_2+s_{0,1}x_2+s_{1,0}x_1+s_{0,0},\quad s_{i,j}\in\mbC.
$$
Let us choose the form $\zeta=9dx_1\wedge dx_2$ to be the primitive form, defining the metric $g_{i_1,j_1;i_2,j_2}(s)$:
$$
g_{i_1,j_1;i_2,j_2}(s)=\frac{1}{(2\pi i)^2}\int_{\left|\frac{\d W_{3,3;s}}{\d x_1}\right|=\left|\frac{\d W_{3,3;s}}{\d x_2}\right|=\eps}\frac{x_1^{i_1+i_2}x_2^{j_1+j_2}}{\frac{\d W_{3,3;s}}{\d x_1}\frac{\d W_{3,3;s}}{\d x_2}}9dx_1\wedge dx_2,\quad 0\le i_1,i_2,j_1,j_2\le 1.
$$
Then the matrix of the metric in the variables $s_{0,0},s_{1,0},s_{0,1},s_{1,1}$ is
$$
g(s)=
\begin{pmatrix}
0 & 0 & 0 & 1\\
0 & 0 & 1 & 0\\
0 & 1 & 0 & 0\\
1 & 0 & 0 & \frac{s_{1,1}^2}{9} 
\end{pmatrix}.
$$

A direct computation shows that the change of coordinates 
\begin{gather*}
t_{0,0}(s)=s_{0,0}+\frac{s_{1,1}^3}{54},\qquad t_{1,0}(s)=-s_{1,0},\qquad t_{0,1}(s)=-s_{0,1},\qquad t_{1,1}(s)=s_{1,1},
\end{gather*}
defines an isomorphism between the Saito Frobenius manifold for the singularity $W_{3,3}$ and the Frobenius manifold given by the potential~$\mcF^\FJRW_{0,W_{3,3}}$ and the unit $\frac{\d}{\d t_{0,0}}$.

\subsection{Answer to Question~\ref{question}}\label{subsection:answer for D4}

In the extended $r$-spin theory, by the same formula~\eqref{eq:rspin class}, we can define the class
$$
c^\rspin(-1,\alpha_1,\ldots,\alpha_n)\in H^*(\oM_{0,n+1},\mbQ),\quad0\le\alpha_1,\ldots,\alpha_n\le r-1,
$$
of degree $2\frac{\sum\alpha_i-(r-1)}{r}$. Note also that we have the property \cite[Lemma 3.5]{BCT17} 
$$
c^\rspin(-1,\alpha_1,\ldots,\alpha_n,0)=\pi^*c^\rspin(-1,\alpha_1,\ldots,\alpha_n),
$$
where $\pi\colon\oM_{0,n+2}\to\oM_{0,n+1}$ is the forgetful map, which forgets the last marked point.

Let us define extended intersection numbers for the singularity $W_{3,3}$ by the formula
\begin{gather}\label{eq:extended correlator for D4}
\<\tau_{-1,-1}\prod_{i=1}^n\tau_{\alpha_i,\beta_i}\>^{W_{3,3}}:=\int_{\oM_{0,n+1}}c^\text{$3$-spin}(-1,\alpha_1,\ldots,\alpha_n)c^\text{$3$-spin}(-1,\beta_1,\ldots,\beta_n),\quad 0\le\alpha_i,\beta_i\le 2.
\end{gather}
This correlator is equal to zero unless
\begin{gather}\label{eq:selection rules for extended D4}
\sum_{i=1}^n\left(1-\frac{\alpha_i+\beta_i}{3}\right)=\frac{2}{3},\qquad \sum_{i=1}^n\alpha_i=2\text{ mod $3$},\qquad \sum_{i=1}^n \beta_i=2\text{ mod $3$}. 
\end{gather}

Define a series $\mcP^{W_{3,3}}(t_{0,0},t_{1,0},t_{0,1},t_{1,1},x_1,x_2)$ by
\begin{align*}
&\mcP^{W_{3,3}}(t,x):=\sum_{\substack{n,k_1,k_2\ge 0\\n+k_1+k_2\ge 1}}\frac{x_1^{k_1}}{k_1!}\frac{x_2^{k_2}}{k_2!}\sum_{\substack{0\le\alpha_1,\ldots,\alpha_n\le 1\\0\le\beta_1,\ldots,\beta_n\le 1}}\<\tau_{-1,-1}\tau_{2,2}\tau_{2,0}^{k_1}\tau_{0,2}^{k_2}\prod_{i=1}^n\tau_{\alpha_i,\beta_i}\>^{W_{3,3}}\frac{\prod_{i=1}^n t_{\alpha_i,\beta_i}}{n!}.
\end{align*}
From~\eqref{eq:selection rules for extended D4} it follows that the series $\mcP^{W_{3,3}}$ is actually a polynomial in the variables $t_{\alpha,\beta},x_1,x_2$. The following result is an analog of Theorem~\ref{theorem:main} for the singularity $W_{3,3}$.
\begin{theorem}\label{theorem:extended mirror for D4}
The function $s_{i,j}(t_{*,*})$ is equal to the coefficient of $x_1^ix_2^j$ in the polynomial $\mcP^{W_{3,3}}(t_{*,*},3x_1,3x_2)$.
\end{theorem}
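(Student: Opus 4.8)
The plan is to prove the theorem as a corollary of a stronger polynomial identity, just as Theorem~\ref{theorem:main} was deduced from~\eqref{eq:equation for the main theorem}. Namely, one should show
$$
\mcP^{W_{3,3}}(t_{*,*},3x_1,3x_2)=x_1^3+x_2^3+t_{1,1}x_1x_2-t_{1,0}x_1-t_{0,1}x_2+t_{0,0}-\frac{t_{1,1}^3}{54},
$$
whose right-hand side is the deformed polynomial $W_{3,3;s}(x_1,x_2)$ written in the flat coordinates, i.e.\ with $s_{1,1}=t_{1,1}$, $s_{1,0}=-t_{1,0}$, $s_{0,1}=-t_{0,1}$ and $s_{0,0}=t_{0,0}-t_{1,1}^3/54$ (the inverse of the change of variables of Section~\ref{subsection:B-model for D4}). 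Granting this, comparing the coefficients of $x_1^ix_2^j$ for $0\le i,j\le 1$ yields exactly $s_{i,j}(t_{*,*})=\Coef_{x_1^ix_2^j}\mcP^{W_{3,3}}(t_{*,*},3x_1,3x_2)$. One first checks, from the selection rules~\eqref{eq:selection rules for extended D4}, that $\mcP^{W_{3,3}}(t_{*,*},x_1,x_2)$ is a polynomial of total degree $\le 3$ in $(x_1,x_2)$ whose coefficients of $x_1^2$, $x_2^2$, $x_1^2x_2$ and $x_1x_2^2$ vanish (all four because of the congruences modulo $3$ in~\eqref{eq:selection rules for extended D4}), so the identity has the expected shape and only six numerical coefficients remain to be pinned down.

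To prove the stronger identity I would use the Thom--Sebastiani factorisation~\eqref{eq:extended correlator for D4}: each correlator entering $\mcP^{W_{3,3}}$ is an integral over some $\oM_{0,N}$ of a product of two $3$-spin classes, in which the $\tau_{0,2}$-insertions carry twist $0$ for the first $3$-spin factor and the $\tau_{2,0}$-insertions carry twist $0$ for the second. By the $0$-twist pullback property~\cite[Lemma~3.3]{BCT17} these classes are pulled back along forgetful morphisms; pushing the pullbacks through the product by the base-change property of the forgetful maps (forgetting one marked point commutes with pushforward along forgetting another), one rewrites each such correlator as
$$
\int_{\oM_{0,n+2}}\pi^{(1)}_*\bigl(c^{\text{$3$-spin}}(-1,2^{1+k_1},\alpha_*)\bigr)\cdot\pi^{(2)}_*\bigl(c^{\text{$3$-spin}}(-1,2^{1+k_2},\beta_*)\bigr),
$$
where $2^{1+k_j}$ stands for $1+k_j$ entries equal to $2$, and $\pi^{(1)}$, resp.\ $\pi^{(2)}$, forgets $k_1$, resp.\ $k_2$, of those twist-$2$ points. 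Summing over $k_1,k_2$ with the weights $(3x_1)^{k_1}/k_1!$ and $(3x_2)^{k_2}/k_2!$ exhibits $\mcP^{W_{3,3}}(t_{*,*},3x_1,3x_2)$ as an $\oM_{0,n+2}$-pairing of two $A_2$-type generating series, one in $x_1$ and one in $x_2$.

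These $A_2$-type series are controlled by Theorem~\ref{theorem:main} for $r=3$, in the form~\eqref{eq:equation for the main theorem}: the generating series of the pushforwards $\pi^{(1)}_*c^{\text{$3$-spin}}(-1,2^{1+k_1},\alpha_*)$, paired with $\psi$-classes, encodes the coefficients of $\partial\mcF^\ext/\partial t_2$ in the $r=3$ theory, and~\eqref{eq:equation for the main theorem} identifies the substitution $t_2=-3\theta_3x_1$ in $\partial\mcF^\ext/\partial t_2$ with the deformed $A_2$-polynomial $x_1^3+\theta_3t_1x_1+t_0$. Feeding this and its $x_2$-counterpart into the pairing over $\oM_{0,n+2}$ reduces the computation to a finite set of ordinary $3$-spin intersection numbers, which assemble --- via the explicit potential~\eqref{eq:potential for W33} and the mirror map of Section~\ref{subsection:B-model for D4} --- into the displayed identity. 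As a cross-check and an alternative route, every correlator appearing here lives on a moduli space $\oM_{0,N}$ with $N\le 5$ and collapses by~\cite[Lemma~3.3]{BCT17} to the $3$-spin numbers fixed by~\eqref{eq:initial for rspin},~\eqref{eq:one-point extended} and~\cite[Lemma~3.6]{BCT17}, so the identity can alternatively be verified by a direct finite computation.

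The step I expect to be the main obstacle is controlling the pushforwards $\pi_*c^{\text{$3$-spin}}(-1,2^{1+k},\alpha_*)$ as genuine cohomology classes, not merely through their integrals: unlike the $0$-twist insertions, the extra $\tau_2=\tau_{r-1}$ insertions are of Ramond type and obey no naive pullback rule, so these forgetful pushforwards must be analysed directly --- essentially a dilaton/string-type argument inside the extended $3$-spin theory --- while one simultaneously keeps careful track of the normalising constants ($\theta_3$ and the powers of $3$ produced by the rescaling $x\mapsto 3x$), so that the two $A_2$-halves recombine to exactly $W_{3,3;s(t)}$ and not to a rescaled version of it. The constant term is a good test of the bookkeeping, since it must reproduce the cubic correction $-t_{1,1}^3/54$ coming from the non-linear part of the flat-coordinate change.
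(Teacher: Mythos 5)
Your overall strategy --- reduce the theorem to the stronger identity $\mcP^{W_{3,3}}(t_{*,*},3x_1,3x_2)=W_{3,3;s(t)}(x_1,x_2)$, use the selection rules~\eqref{eq:selection rules for extended D4} to constrain the shape of $\mcP^{W_{3,3}}$, and then pin down finitely many coefficients --- is exactly the paper's. But neither of the two routes you offer for pinning down those coefficients actually closes. The main route requires pushing forward the classes $c^{\threespin}(-1,2^{1+k},\alpha_*)$ along forgetful maps and then pairing the resulting generating series; as you yourself note, the extra $\tau_2$-insertions are Ramond-type and are not covered by the pullback property of \cite[Lemma~3.3]{BCT17}, and nothing in this paper or in \cite{BCT17} controls those pushforwards as cohomology classes. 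Moreover, equation~\eqref{eq:equation for the main theorem} for $r=3$ is a statement about the numbers $\<\tau_{-1}\prod\tau_{\alpha_i}\>^{\threespin}$, not about classes on $\oM_{0,n+2}$ that could subsequently be paired against a second $3$-spin class, so the reduction to ``two $A_2$-halves'' does not go through as described. This route is a sketch of a hope, not a proof.

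Your fallback route is essentially what the paper does, but the claim that every correlator ``collapses by \cite[Lemma~3.3]{BCT17} to the $3$-spin numbers fixed by~\eqref{eq:initial for rspin},~\eqref{eq:one-point extended} and \cite[Lemma~3.6]{BCT17}'' is false for two of the coefficients. The correlators~\eqref{eq:extended correlator for D4} are integrals of \emph{products} of two $3$-spin classes, so one must know the classes themselves, not merely their integrals. For the coefficient of $t_{1,1}x_1x_2$ this is still manageable: each factor, e.g.\ $c^{\threespin}(-1,2,2,0,1)$, contains a twist-$0$ entry, Lemma~3.3 applies, and the factor is identified as $-\tfrac13(\psi_1-D_{1,4})$ from the one-point values. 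But the coefficient of $t_{1,1}^3$ is $\int_{\oM_{0,5}}c^{\threespin}(-1,2,1,1,1)^2$, and the class $c^{\threespin}(-1,2,1,1,1)$ has no twist-$0$ entry, so Lemma~3.3 gives nothing. The paper determines this class to be $-\tfrac13 D_{1,2}$ by combining the $S_3$-invariance of the class with the topological recursion relation of \cite[Lemma~3.4]{BCT17}; this ingredient is entirely absent from your proposal, and without it the constant $\alpha_7=-\tfrac19$ --- hence the $-t_{1,1}^3/54$ term that you yourself single out as the decisive test --- is not computed.
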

\begin{proof}
We will actually prove a bit stronger statement:
\begin{gather}\label{eq:extended mirror for D4}
W_{3,3;s}(x_1,x_2)=\mcP^{W_{3,3}}(t_{*,*},3x_1,3x_2).
\end{gather}
The proof is by direct computation. For the left-hand side we have
$$
W_{3,3;s}(x)=x_1^3+x_2^3+t_{1,1}x_1x_2-t_{1,0}x_1-t_{0,1}x_2+\left(t_{0,0}-\frac{t_{1,1}^3}{54}\right).
$$

Let us compute the polynomial $\mcP^{W_{3,3}}(t,x)$. From~\eqref{eq:selection rules for extended D4} it follows that it has the form
$$
\mcP^{W_{3,3}}(t,x)=\alpha_1\frac{x_1^3}{6}+\alpha_2\frac{x_2^3}{6}+\alpha_3 t_{1,1}x_1x_2+\alpha_4 t_{1,0}x_1+\alpha_5 t_{0,1}x_2+\left(\alpha_6t_{0,0}+\alpha_7\frac{t_{1,1}^3}{6}\right),\quad\alpha_1,\ldots,\alpha_7\in\mbQ.
$$
We compute
\begin{align*}
&\alpha_1=\alpha_2=\<\tau_{-1,-1}\tau_{2,2}\tau_{2,0}^3\>^{W_{3,3}}=\int_{\oM_{0,5}}c^{\text{$3$-spin}}(-1,2,2,2,2)c^{\text{$3$-spin}}(-1,2,0,0,0)=\\
&\hspace{1.42cm}=\<\tau_{-1}\tau_2^4\>^{\text{$3$-spin}}\stackrel{\text{by \eqref{eq:one-point extended}}}{=}\frac{2}{9},\\
&\alpha_3=\<\tau_{-1,-1}\tau_{2,2}\tau_{2,0}\tau_{0,2}\tau_{1,1}\>^{W_{3,3}}=\int_{\oM_{0,5}}c^\threespin(-1,2,2,0,1)c^\threespin(-1,2,0,2,1).
\end{align*}
Denote by $\pi_3\colon\oM_{0,5}\to\oM_{0,4}$ and $\pi_4\colon\oM_{0,5}\to\oM_{0,4}$ the forgetful maps which forget the third and fourth marked points, respectively. Then
$$
c^\threespin(-1,2,2,0,1)=\pi_4^*c^\threespin(-1,2,2,1)=-\frac{1}{3}\pi_4^*\psi_1=-\frac{1}{3}(\psi_1-D_{1,4}),
$$
where $D_{i,j}$ denotes the cohomology class, Poincar\'e dual to the divisor in $\oM_{0,5}$, whose generic point is a nodal curve made of two bubbles containing the marked points labeled by~$\{i,j\}$ and~$\{1,2,3,4,5\}\backslash\{i,j\}$, respectively. Similarly, 
$$
c^\threespin(-1,2,0,2,1)=-\frac{1}{3}\pi_3^*\psi_1=-\frac{1}{3}(\psi_1-D_{1,3}).
$$
As a result,
$$
\alpha_3=\int_{\oM_{0,5}}c^\threespin(-1,2,2,0,1)c^\threespin(-1,2,0,2,1)=\frac{1}{9}\int_{\oM_{0,5}}(\psi_1-D_{1,4})(\psi_1-D_{1,3})=\frac{1}{9}.
$$
For the constants $\alpha_4$ and $\alpha_5$ we get
$$
\alpha_4=\alpha_5=\<\tau_{-1,-1}\tau_{2,2}\tau_{2,0}\tau_{1,0}\>^{W_{3,3}}=\<\tau_{-1}\tau_2^2\tau_1\>^\threespin\stackrel{\text{by~\eqref{eq:one-point extended}}}{=}-\frac{1}{3}.
$$
We continue with $\alpha_6$:
$$
\alpha_6=\<\tau_{-1,-1}\tau_{2,2}\tau_{0,0}\>^{W_{3,3}}=1.
$$

For the constant $\alpha_7$ we compute
$$
\alpha_7=\<\tau_{-1,-1}\tau_{2,2}\tau_{1,1}^3\>^{W_{3,3}}=\int_{\oM_{0,5}}c^\threespin(-1,2,1,1,1)^2.
$$
Consider the $S_3$-action on $\oM_{0,5}$ induced by permutations of the last three marked points. Clearly, 
$$
c^\threespin(-1,2,1,1,1)\in H^2(\oM_{0,5},\mbQ)^{S_3}.
$$
Let us compute this class. The basis of the group $H^2(\oM_{0,5},\mbQ)^{S_3}$ is given by
$$
D_1=D_{1,2},\qquad D_2=D_{3,4}+D_{3,5}+D_{4,5},\qquad D_3=D_{2,3}+D_{2,4}+D_{2,5}.
$$ 
The intersection matrix $\left(\int_{\oM_{0,5}}\psi_i D_j\right)_{1\le i,j\le 3}$ is non-degenerate. Using also the integrals
$$
\int_{\oM_{0,5}}\psi_ic^\threespin(-1,2,1,1,1)=-\frac{1}{3}\delta_{i,3},\quad i=1,2,3,
$$ 
computed using the topological recursion relations in the extended $r$-spin theory~\cite[Lemma~3.6]{BCT17}, we conclude that $c^\threespin(-1,2,1,1,1)=-\frac{1}{3}D_1=-\frac{1}{3}D_{1,2}$. Therefore,
$$
\alpha_7=\int_{\oM_{0,5}}c^\threespin(-1,2,1,1,1)^2=\frac{1}{9}\int_{\oM_{0,5}}D_{1,2}^2=-\frac{1}{9}.
$$
Thus, the polynomial $\mcP^{W_{3,3}}(t,x)$ is equal to
$$
\mcP^{W_{3,3}}(t,x)=\frac{x_1^3}{27}+\frac{x_2^3}{27}+t_{1,1}\frac{x_1x_2}{9}-t_{1,0}\frac{x_1}{3}-t_{0,1}\frac{x_2}{3}+\left(t_{0,0}-\frac{t_{1,1}^3}{54}\right),
$$
and we can immediately see that the polynomial $W_{3,3;s}(x)$ is obtained from it by the rescaling $x_i\mapsto 3x_i$. The theorem is proved.
\end{proof}

\subsection{Conjecture for the singularities $E_6$ and $E_8$}\label{subsection:conjecture for E6 and E8}

The singularities $E_6$ and $E_8$ coincide with the singularities 
$$
W_{4+m,3}(x_1,x_2)=x_1^{4+m}+x_2^3,
$$
for $m=0$ and $m=1$, respectively. Similarly to the case of the singularity $W_{3,3}$, let us define functions $\mcP^{E_{6+2m}}(t,x)$, $m=0,1$, by
$$
\mcP^{E_{6+2m}}(t,x):=\sum_{\substack{n,k_1,k_2\ge 0\\n+k_1+k_2\ge 1}}\frac{x_1^{k_1}}{k_1!}\frac{x_2^{k_2}}{k_2!}\sum_{\substack{0\le\alpha_1,\ldots,\alpha_n\le 2+m\\0\le \beta_1,\ldots,\beta_n\le 1}}\<\tau_{-1,-1}\tau_{3+m,2}\tau_{3+m,0}^{k_1}\tau_{0,2}^{k_2}\prod_{i=1}^n\tau_{\alpha_i,\beta_i}\>^{W_{4+m,3}}\frac{\prod_{i=1}^n t_{\alpha_i,\beta_i}}{n!}.
$$
The Landau--Ginzburg mirror symmetry for the singularities $E_6$ and $E_8$, as well as for all simple singularities, was proved in~\cite{FJR13}. Denote by $s^{E_{6+2m}}_{i,j}(t)$, $0\le i\le 2+m$, $0\le j\le 1$, the corresponding transformation between the flat coordinates and the parameters of the miniversal deformation.
\begin{conjecture}
We have
$$
s^{E_{6+2m}}_{i,j}(t_{*,*})=\Coef_{x_1^ix_2^j}\mcP^{E_{6+2m}}(t_{*,*},-(4+m)\theta_{4+m}x_1,-3\theta_3x_2).
$$
\end{conjecture}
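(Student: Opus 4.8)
The plan is to prove the Conjecture, for each $m\in\{0,1\}$, by establishing the stronger identity
$$
W_{4+m,3;s}(x_1,x_2)=\mcP^{E_{6+2m}}\bigl(t_{*,*},-(4+m)\theta_{4+m}x_1,-3\theta_3 x_2\bigr),
$$
where on the left $s=s(t_{*,*})$ is substituted via the Landau--Ginzburg mirror map and $W_{4+m,3;s}$ is a suitably normalised miniversal deformation (possibly after a polynomial correction of the coordinate $s_{0,0}$, the analogue of the $t_{1,1}^3/54$ shift in the $W_{3,3}$-case). Comparing the coefficients of $x_1^ix_2^j$ on the two sides then yields the Conjecture. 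This reproduces the logical structure of Theorem~\ref{theorem:main} (through equation~\eqref{eq:equation for the main theorem}) and of Theorem~\ref{theorem:extended mirror for D4} (through equation~\eqref{eq:extended mirror for D4}).

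\emph{B-model side.} First I would fix the primitive form $\zeta=\lambda\,dx_1\wedge dx_2$ with the phase $\lambda$ dictated by the FJRW normalisation (the analogue of the factor $-\theta_r^2 r$ in the $A_{r-1}$-case and of $9\,dx_1\wedge dx_2$ in the $W_{3,3}$-case), compute the Saito metric $g_{i_1,j_1;i_2,j_2}(s)$ by the residue formula, and then determine the flat coordinates $t_{i,j}(s)$ together with the inverse change of variables $s_{i,j}(t_{*,*})$ in explicit closed form. The existence, polynomiality, and compatibility with the FJRW Frobenius structure of this change of variables are guaranteed by the proof of the Landau--Ginzburg mirror symmetry for the simple singularities in~\cite{FJR13}; the explicit formulas for $\mu=6$ and $\mu=8$ are obtained by a direct computation exactly as in Section~\ref{subsection:B-model for D4}.

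\emph{A-model side.} Next I would compute all the extended intersection numbers $\<\tau_{-1,-1}\tau_{3+m,2}\tau_{3+m,0}^{k_1}\tau_{0,2}^{k_2}\prod_{i}\tau_{\alpha_i,\beta_i}\>^{W_{4+m,3}}$ occurring in $\mcP^{E_{6+2m}}$. Each is an integral over $\oM_{0,N}$ of a product of two extended $r$-spin classes, so the computation proceeds as in the proof of Theorem~\ref{theorem:extended mirror for D4}: use~\cite[Lemma~3.3]{BCT17} to strip off markings carrying twist $0$ by pulling back along forgetful maps, use the selection rules coming from~\eqref{eq:degree of extended class} to bound the degrees and hence the finite list of surviving correlators, use the one-point extended values~\eqref{eq:one-point extended} for the pure monomials $x_1^{\,3+m}$ and $x_2^{\,2}$, and use the topological recursion relations of~\cite[Lemma~3.4]{BCT17} together with the standard $\psi$--boundary intersection theory on $\oM_{0,N}$ (as in the evaluation of $\alpha_3$ and $\alpha_7$ in the $W_{3,3}$-case) for the mixed terms. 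Since the second exponent is always $r_2=3$, the second spin class in each product has small degree and admits an explicit expression in $\psi$-classes and boundary divisors, which keeps everything finite.

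\emph{Conclusion and main obstacle.} Assembling the computed correlators gives $\mcP^{E_{6+2m}}(t,x)$ as an explicit polynomial, and matching it term by term against $W_{4+m,3;s(t)}$ after the rescaling $x_1\mapsto -(4+m)\theta_{4+m}x_1$, $x_2\mapsto -3\theta_3 x_2$ completes the argument. The hard part is the A-model computation for $E_8$ (and, to a lesser extent, $E_6$): in contrast with the $A_{r-1}$-case, where Proposition~4.10 of~\cite{BCT17} delivers the entire generating series in one stroke via the Dubrovin--Zhang/KdV description of the one-variable extended $r$-spin theory, no such integrable-hierarchy description is available for the two-variable extended theory, so there is no uniform closed formula and the correlators must be evaluated essentially one by one through tautological relations on $\oM_{0,N}$, whose codimension grows with $r_1$. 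A secondary difficulty is pinning down the precise B-model normalisation (the choice of $\lambda$ and of the polynomial correction to $s_{0,0}$), the analogue of the inaccuracy that had to be fixed in the $D_4$-case; getting this exactly right is what would upgrade the statement from a conjecture to a theorem.
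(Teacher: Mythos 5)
This statement is a \emph{conjecture} in the paper: no proof is given, so there is nothing to compare your proposal against except the analogous proofs of Theorem~\ref{theorem:main} and Theorem~\ref{theorem:extended mirror for D4}, whose strategy you correctly identify (prove the stronger polynomial identity $W_{4+m,3;s}=\mcP^{E_{6+2m}}$ after the stated rescaling, then read off coefficients of $x_1^ix_2^j$). As a proof, however, your text has a gap that is essentially its entire content: every substantive step is deferred. You do not compute the flat coordinates $t_{i,j}(s)$ or their inverses for $\mu=6,8$; you do not evaluate a single extended correlator beyond citing where the tools live; and you do not verify the match. In particular you leave the primitive-form constant $\lambda$ and the possible polynomial correction to $s_{0,0}$ ``to be pinned down,'' whereas the conjecture already commits to specific rescalings $-(4+m)\theta_{4+m}$ and $-3\theta_3$; a proof must either derive these constants or show the two sides agree for the normalisation the conjecture fixes. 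Your own closing admission --- that getting the normalisation right ``is what would upgrade the statement from a conjecture to a theorem'' --- concedes that the argument is not complete.

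The one place where the plan is not merely incomplete but potentially insufficient is the A-model computation for $E_8$ (and partly $E_6$). In the $W_{3,3}$ case the only nontrivial mixed term required identifying $c^\threespin(-1,2,1,1,1)$ in $H^2(\oM_{0,5},\mbQ)^{S_3}$, which was possible because the relevant invariant subspace is $3$-dimensional and the TRR integrals of \cite[Lemma~3.4]{BCT17} pin the class down. For $W_{5,3}$ the surviving correlators involve extended $5$-spin classes of higher cohomological degree on moduli spaces with more markings, and it is not established --- and you do not argue --- that the pullback property, the selection rules, the values \eqref{eq:one-point extended}, and the topological recursion relations suffice to determine all of them. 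Absent either such an argument or the explicit computations, the statement remains a conjecture.
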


\end{document}